\numberwithin{equation}{section}
\theoremstyle{plain}
\newtheorem{theorem}[equation]{Theorem}
\newtheorem{proposition}[equation]{Proposition}
\newtheorem{lemma}[equation]{Lemma}
\newtheorem{corollary}[equation]{Corollary}
\theoremstyle{remark}
\theoremstyle{definition}
\newtheorem{definition}[equation]{Definition}
\newtheorem*{question*}{Question}
\newcommand{\M}{{\mathcal M}}
\newcommand{\R}{\mathbb R}
\renewcommand{\t}{\mathfrak{t}}
\newcommand{\T}{{\mathcal T}}
\newcommand{\V}{{\mathcal V}}
\newcommand{\Z}{\mathbb Z}
\newcommand{\ben}{\begin{enumerate}}
\newcommand{\een}{\end{enumerate}}
\newcommand{\const}{\operatorname{const.}}
\newcommand{\cyl}{Cyl}
\newcommand{\diam}{\operatorname{Diam}}
\newcommand{\dvol}{\operatorname{dvol}}
\definecolor{gray}{gray}{0.7}
\newcommand{\ric}{\operatorname{Ric}}
\newcommand{\vol}{\operatorname{vol}}
\newcommand{\D}{\partial}
\newcommand{\eps}{\epsilon}
\newcommand{\ga}{\gamma}
\newcommand{\la}{\lambda}
\newcommand{\ra}{\rightarrow}
\newcommand{\Rm}{\operatorname{Rm}}
\def\XXint#1#2#3{{\setbox0=\hbox{$#1{#2#3}{\int}$}
     \vcenter{\hbox{$#2#3$}}\kern-.5\wd0}}
\begin{document}

\begin{abstract}
We establish several quantitative results about singular Ricci flows, including estimates on the curvature and volume, and the set of singular times.
\end{abstract}

\title{Singular Ricci flows II}
\author{Bruce Kleiner}
\address{Courant Institute of Mathematical Sciences\\
251 Mercer St. \\
New York, NY  10012}
\email{bkleiner@cims.nyu.edu}

\author{John Lott}
\address{Department of Mathematics\\
University of California at Berkeley \\
Berkeley, CA  94720}
\email{lott@berkeley.edu}
\thanks{The first author was supported by NSF grants DMS-1405899 and
  DMS-1711556, and a Simons Collaboration grant. The second author was supported by NSF grants DMS-1510192 and DMS-1810700.}
\date{November 17, 2018}
\maketitle

\tableofcontents

\section{Introduction}

In \cite{kleiner_lott_singular_ricci_flows,bamler_kleiner_uniqueness}, it was shown that there exists a canonical Ricci flow through singularities starting from an arbitrary compact Riemannian $3$-manifold, and that this flow may obtained as a limit of a sequence of Ricci flows with surgery.  These results confirmed a conjecture of Perelman \cite{perelman_entropy,perelman_surgery}, and  were used in the proof of the Generalized Smale Conjecture in \cite{bamler_kleiner_gsc}.  

The purpose of this paper, which is a sequel to \cite{kleiner_lott_singular_ricci_flows}, is to further study Ricci flow through singularities.      

We recall that the basic object introduced in \cite{kleiner_lott_singular_ricci_flows} is a {\em singular Ricci flow}, which is a Ricci flow spacetime subject to several additional conditions; see
Definition \ref{srf} of 
Section~\ref{sec2} or \cite[Def. 1.6]{kleiner_lott_singular_ricci_flows}.

In the following, we let $\M$ be a singular Ricci flow
with parameter functions $\kappa$ and  $r$, and we let $\M_t$ denote
a time slice. 
The main results of the paper are the following.

\begin{theorem} \label{t1}
  For all $p \in (0,1)$ and all $t$, the scalar curvature is
  $L^p$ on $\M_t$.
\end{theorem}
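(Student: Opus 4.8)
The plan is to split the integral over $\M_t$ at a large curvature threshold and to control the high-curvature part by a decay estimate for $\vol(\{R>Q\}\cap\M_t)$ as $Q\to\infty$.

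Fix $t$. By the scalar curvature lower bound $R\geq -C(t)$ (Hamilton--Ivey pinching and the maximum principle) together with the finiteness of $\vol(\M_t)$, the set $\{R\leq r_0\}$ contributes at most $(\max(r_0,C(t)))^p\,\vol(\M_t)<\infty$ to $\int_{\M_t}|R|^p\,\dvol$, for any fixed $r_0$; choose $r_0$ to be at least the canonical neighborhood curvature threshold at time $t$ (if $\{R>r_0\}=\emptyset$ we are done). By the layer-cake formula,
\[
\int_{\{R>r_0\}} R^p\,\dvol \;=\; r_0^p\,\vol(\{R>r_0\}\cap\M_t)\;+\;p\int_{r_0}^\infty Q^{p-1}\,\vol(\{R>Q\}\cap\M_t)\,dQ,
\]
so it suffices to find, for the given $p\in(0,1)$, some $\be\in(p,1)$ with $\vol(\{R>Q\}\cap\M_t)\leq C\,Q^{-\be}$ for all $Q\geq r_0$.

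For the decay estimate I would argue backward in time using the volume-distortion formula for Ricci flow. Set $W=\{R>Q\}\cap\M_t$. Worldlines of a singular Ricci flow extend backward to the initial time, so for $s\leq t$ let $W(s)\subseteq\M_s$ be the set of points whose forward worldline reaches $W$ at time $t$; then $\vol_t(W)=\int_{W(s)}\exp\!\big(-\!\int_s^t R\big)\,\dvol_s$, the inner integral taken along the worldline. Along a worldline through the high-curvature region one has $\partial_\t R\leq(1+\de_0)R^2$ with $\de_0=\de_0(r_0)$ small; integrating $\partial_\t(R^{-1})\geq -(1+\de_0)$ backward from a value exceeding $Q$ gives $R\geq\big(Q^{-1}+(1+\de_0)(t-u)\big)^{-1}$ as long as the right side is at least $r_0$, i.e. for $t-u\leq T:=(1+\de_0)^{-1}(r_0^{-1}-Q^{-1})$. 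A continuity argument shows the worldlines defining $W(s)$ stay in the canonical region for $s\in[t-T,t]$; inserting this lower bound for $R$ into the volume-distortion formula and integrating yields
\[
\vol_t(W)\;\leq\;(Q/r_0)^{-1/(1+\de_0)}\,\vol_{t-T}(W(t-T))\;\leq\;(r_0/Q)^{1/(1+\de_0)}\sup_{s\in[t-r_0^{-1},\,t]}\vol(\M_s).
\]
Since the volume of time slices is bounded on compact subintervals of $(0,\infty)$ (and the case of small $t$, where the flow is smooth on a compact manifold, is trivial), this is the required estimate with $\be=1/(1+\de_0)$; choosing $r_0$ large enough that $\de_0(r_0)$ is small enough to make $1/(1+\de_0)>p$ finishes the proof.

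The crux is the sharp differential inequality $\partial_\t R\leq(1+o(1))R^2$ at points of large scalar curvature. The crude canonical neighborhood estimate only gives $\partial_\t R\leq\eta R^2$ for a fixed universal $\eta$, which by the above would prove the theorem only for $p<1/\eta$. To reach all $p<1$ one needs that in a singular Ricci flow the canonical neighborhoods become arbitrarily close to $\kappa$-solutions as $R\to\infty$, together with the fact that on a $3$-dimensional $\kappa$-solution $\partial_\t R\leq R^2$, the extremal case being the shrinking round cylinder. For the latter, $\partial_\t R=\Lap R+2|\ric|^2$ and in dimension $3$ nonnegative sectional curvature forces $2|\ric|^2\leq R^2$; the delicate point, which I expect to be the real work, is controlling $\Lap R$ (it vanishes on the cylinder and is nonpositive on the Bryant soliton, but a uniform argument over all $\kappa$-solutions requires more). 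The remaining inputs — finiteness and local boundedness of $\vol(\M_t)$, the lower bound on $R$, and the backward extendability of worldlines — are part of the basic structure of singular Ricci flows.
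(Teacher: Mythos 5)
Your overall mechanism --- bounding the Jacobian $e^{-\int R}$ along backward worldlines to get volume decay $\vol(\{R>Q\}\cap\M_t)\lesssim Q^{-\beta}$ with $\beta$ close to $1$, then integrating by the layer-cake formula --- is the same engine that drives the paper's proof of Proposition \ref{prop_r_in_lp}. But the step you yourself flag as ``the real work'' is a genuine gap, and it is exactly the point where the paper's argument is structured differently. You need $\partial_t R\le(1+\delta_0)R^2$ at \emph{every} point of the backward worldline of \emph{every} point of $\{R>Q\}$, down to the threshold $r_0$. Via the canonical neighborhood assumption this reduces to the assertion that $\partial_t R\le R^2$ holds pointwise on every $3$-dimensional $\kappa$-solution: there is no high-curvature threshold inside a fixed $\kappa$-solution, so you cannot discard any of its points, and compactness of the space of $\kappa$-solutions only converts a non-strict inequality on all of them into your $(1+\delta_0)$ version. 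Since $\partial_t R=\Delta R+2|\ric|^2$ and $2|\ric|^2=R^2$ exactly in the cylindrical case, the needed inequality is $\Delta R\le R^2-2|\ric|^2$ with \emph{both} sides vanishing on the cylinder; at the waist of a compact $\kappa$-solution, where $\Delta R\ge 0$ and the curvature is nearly cylindrical, this is a statement about competing error terms, and no general principle (Harnack gives only the lower bound $\partial_t R\ge 0$) delivers it. So the crux of your proof is unproven and not obviously true.

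The paper never asserts this pointwise inequality at non-neck points. It splits the high-curvature region into neck points, cap points, and points in compact components. At neck points the constant $q>1$ close to $1$ comes from closeness to the round cylinder, and --- crucially --- the neck stability theorem from the first paper guarantees that the backward worldline \emph{remains} neck-like until its curvature drops to the canonical scale, so the sharp estimate persists along the whole worldline; your ``continuity argument'' keeps the worldline in the canonical regime but does not keep it in the \emph{cylindrical} regime, which is what the sharp constant requires. Cap points are then handled with no derivative estimate at all, via the comparison (\ref{capineq}) bounding their contribution by that of the adjacent necks, and compact components get a separate three-stage argument. You would also need to address worldlines that do not extend backward for the required time (bad worldlines exist; the paper restricts to the full-measure set $X_3$ of good worldlines), but that is a minor fix. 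The essential missing ingredients are neck stability and the cap/closed-component comparisons, or else a genuine proof of $\partial_t R\le R^2$ on all $\kappa$-solutions.
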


\begin{theorem} \label{t2}
  The volume function $\V(t) = \vol(\M_t)$ has a locally bounded
upper-right derivative and is locally $\alpha$-H\"older
  in $t$ for some exponent $\alpha \in (0,1)$.
    \end{theorem}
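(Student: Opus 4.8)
The plan is to treat the two assertions separately. Fix a compact interval $[a,b]$ in the time domain. I will use two standard facts without proof: the scalar curvature is bounded below, $R \ge -C_0$ on $\M_{[a,b]}$ (from $\partial_\t R = \Lap R + 2|\ric|^2 \ge \Lap R + \tfrac23 R^2$ and the maximum principle on a singular Ricci flow), and $\V(t) = \vol(\M_t) < \infty$ for each $t$. For $x \in \M_t$ and $s \ge t$ let $x(s)$ denote the point at time $s$ on the worldline through $x$; it is defined up to an extinction time $t'(x) \in (t,\infty]$, worldlines are defined at all earlier times, a finite $t'(x)$ forces $R(x(s)) \to \infty$ as $s \uparrow t'(x)$, and $x \mapsto x(s)$ carries the survivor set $\{t'(x) > s\}$ diffeomorphically onto $\M_s$ with $\dvol_t/\dvol_s = \exp\!\big(\int_t^s R(x(\tau))\,d\tau\big)$ (since $\partial_\t \dvol = -R\,\dvol$). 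For the upper-right derivative, $R \ge -C_0$ and this Jacobian identity give $\V(t+h) = \int_{\{t'(x)>t+h\}} e^{-\int_t^{t+h}R}\,\dvol_t \le e^{C_0 h}\V(t)$ for $t,t+h \in [a,b]$; hence $\V$ is bounded on $[a,b]$ and $\limsup_{h\to 0^+} h^{-1}(\V(t+h)-\V(t)) \le C_0\V(t)$, which is locally bounded.

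For H\"older continuity, fix $p\in(0,1)$; by the upper bound just obtained and triviality for $h$ bounded below, it suffices to show $\V(t)-\V(t+h)\le Ch^p$ for $t,t+h\in[a,b]$ with $h$ small. From Theorem~\ref{t1}, in the local-in-time uniform form that $\int_{\M_s}R_+^p\,\dvol_s$ (with $R_+=\max(R,0)$) is bounded uniformly for $s\in[a,b]$ — which I expect to obtain from its proof — Chebyshev's inequality yields $\vol(\{R\ge w\}\cap\M_s)\le C_1 w^{-p}$ on $[a,b]$. Writing $\Omega=\{x\in\M_t:t'(x)>t+h\}$ for the survivors and using the Jacobian identity together with $1-e^{-\int_t^{t+h}R}\le\min\!\big(1,\int_t^{t+h}R_+(x(\tau))\,d\tau\big)$, we get $\V(t)-\V(t+h)=\vol(\M_t\setminus\Omega)+\int_\Omega(1-e^{-\int_t^{t+h}R})\,\dvol_t$. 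Now set $W=c_0 h^{-1}$ for a small constant $c_0$ to be chosen, and let $H=\{x\in\M_t: R(x(s))>W\text{ for some }s\in[t,t'(x))\cap[t,t+h)\}$. A worldline extinct before $t+h$ attains every curvature level on $[t,t+h)$, so $\M_t\setminus\Omega\subseteq H$; bounding the integrand by $1$ on $\Omega\cap H$ and by $\int_t^{t+h}R_+$ on $\Omega\setminus H$,
\[
\V(t)-\V(t+h)\;\le\;2\vol(H)\;+\;\int_{\Omega\setminus H}\int_t^{t+h}R_+(x(\tau))\,d\tau\,\dvol_t .
\]

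The key step — and the one I expect to be the main obstacle — is the containment $H\subseteq\{R(\cdot,t)\ge W/2\}\cap\M_t$, valid once $c_0$ and $h$ are small in terms of $[a,b]$ and the canonical neighborhood data. Given $x\in H$ with $R(x,t)\le W$, let $s_\ast\in[t,t+h)$ be the first time with $R(x(s_\ast))=W$. On the canonical neighborhood region $\{R\ge r(\t)^{-2}\}$ one has $|\partial_\t R|\le C_2 R^2$, i.e. $s\mapsto 1/R(x(s))$ is $C_2$-Lipschitz there; a continuity/bootstrap argument running backward from $s_\ast$ then shows, provided $W/2\ge r(\cdot)^{-2}$ on a fixed compact interval containing $[a,b]$ and $C_2 c_0$ is small, that $R(x(s))\ge W/2$ on $[\max(t,s_\ast-h),s_\ast]$ — an interval that contains $t$ because $s_\ast\le t+h$ — so $R(x,t)\ge W/2$. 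The delicate point is that the time scale $1/W=h/c_0$ on which curvature is controlled must be arranged to dominate the interval length $h$; this, together with the two worldline properties invoked above (the $\partial_\t R$ bound from the canonical neighborhood structure and terminal curvature blowup), is where the real work lies. Granting the containment, Chebyshev gives $\vol(H)\le C_1(W/2)^{-p}\lesssim h^p$.

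To finish, note that on $\Omega\setminus H$ every worldline has $R(x(s))\le W$ on $[t,t+h]$, so there $\dvol_t/\dvol_s=\exp(\int_t^s R)\le e^{Wh}=e^{c_0}$; hence by Fubini and the change of variables $y=x(s)$ in the inner integral,
\[
\int_{\Omega\setminus H}\int_t^{t+h}R_+(x(\tau))\,d\tau\,\dvol_t \;\le\; e^{c_0}\int_t^{t+h}\!\int_{\M_s}\min(R_+,W)\,\dvol_s\,ds .
\]
Expanding $\int_{\M_s}\min(R_+,W)\,\dvol_s=\int_0^W\vol(\{R_+>w\}\cap\M_s)\,dw$ and inserting the Chebyshev bound together with $\V(s)\le\sup_{[a,b]}\V<\infty$ gives $\int_{\M_s}\min(R_+,W)\,\dvol_s\le C_3(1+W^{1-p})$, so the right-hand side is $\le e^{c_0}C_3 h(1+W^{1-p})=e^{c_0}C_3(h+c_0^{1-p}h^p)\le C_4 h^p$. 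Combined with $\vol(H)\lesssim h^p$ this yields $\V(t)-\V(t+h)\le Ch^p$, which with the first part gives local $p$-H\"older continuity of $\V$ for every $p\in(0,1)$.
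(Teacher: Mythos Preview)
Your argument for the H\"older direction is correct and in fact yields a stronger conclusion than the paper's explicit estimate: you obtain $\alpha$-H\"older continuity for every $\alpha<1$, whereas Proposition~\ref{holder} produces the specific exponent $1/\eta$.

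The two proofs share the same skeleton. Both use the gradient estimate $|\partial_\t R|<\eta R^2$ in canonical neighborhoods to show that a worldline that reaches high curvature in $[t,t+h)$ must already have comparably high curvature at time $t$ (your containment $H\subseteq\{R(\cdot,t)\ge W/2\}$ is the counterpart of the paper's lemma giving $(X_2\cup X_1')\subset\{R\ge 1/(\eta h)\}$), and both then invoke the $L^p$ bound on $R$ via Chebyshev. The difference lies in how the moderate-curvature contribution is handled. The paper partitions $\M_{t_1}$ by the \emph{initial} value of $R$ and integrates the gradient estimate to get the Jacobian bound $J\ge(1-\eta Rh)^{1/\eta}$; the elementary inequality $(1-z)^{1/\eta}-1\ge -z^{1/\eta}$ then forces the exponent $1/\eta$ on the $X_1''$ and $X_1'''$ pieces. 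You instead work on the complement of $H$, where $R\le W$ uniformly along worldlines; this makes the Jacobian bounded by a constant, so Fubini and the layer-cake formula reduce the problem to $\int_{\M_s}\min(R_+,W)\,\dvol_s=O(W^{1-p})$. This sidesteps the $1/\eta$ loss and explains the better exponent.

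One small caveat: your assertion that ``worldlines are defined at all earlier times'' is false for singular Ricci flows --- bad worldlines exist --- and you use it to claim surjectivity of the forward flow map onto $\M_{t+h}$. For the H\"older direction this is harmless, since you only need the inequality $\V(t+h)\ge\int_\Omega e^{-\int R}\,\dvol_t$, whence $\V(t)-\V(t+h)\le\vol(\M_t\setminus\Omega)+\int_\Omega(1-e^{-\int R})\,\dvol_t$. For the upper-right derivative bound (and for the finiteness of $\sup_{[a,b]}\V$ that you invoke later) it does matter; there one must cite that bad worldlines have measure zero in each time slice, which is what the paper does by referring to \cite[Proposition~5.5 and Corollary~7.7]{kleiner_lott_singular_ricci_flows}.
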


The first assertion of the theorem was shown in  \cite[Proposition 5.5 and Corollary 7.7]{kleiner_lott_singular_ricci_flows}, so the issue here is to prove
H\"older continuity in the opposite direction.

We state the next result loosely, with a more
precise formulation given later.

\begin{theorem} \label{t3}
  The {\it a priori} assumptions in Definition \ref{srf} of a
  singular Ricci flow are
  really conditions on the spacetime near infinity, in the sense
  that if the conditions hold outside of compact subsets then,
  after redefinition of $\kappa$ and $r$, they hold everywhere.
  \end{theorem}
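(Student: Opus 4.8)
The plan is to first record a precise version of the statement. Let $\M$ denote a Ricci flow spacetime (not assumed a priori to be a singular Ricci flow) with compact time-zero slice $\M_0$, and suppose there is a compact set $K\subseteq\M$ such that all the defining conditions of Definition~\ref{srf} --- $0$-completeness, the canonical neighborhood assumption above the scale function $r$ (that is, points $x$ with $R(x)\ge r(\mathfrak{t}(x))^{-2}$ admit canonical neighborhoods), and the $\kappa$-noncollapsing assumption below some scale $\rho_0$ --- hold on $\M\setminus K$. I would then show that one can choose new non-increasing functions $r'\le r$ and $\kappa'\le\kappa$ for which all of these conditions hold on all of $\M$, so that $\M$ is a singular Ricci flow. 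The underlying idea is that any failure on $K$ can be absorbed into a worsening of the parameter functions, using only that $K$ is compact and that, $g$ being smooth on the spacetime, $R$ is bounded on $K$ --- say $R\le C_0$ on $K$, with $\mathfrak{t}(K)\subseteq[0,T]$.

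For the canonical neighborhood assumption I would simply shrink $r$: put $r'(t):=\min\bigl(r(t),(2C_0+1)^{-1/2}\bigr)$, which is non-increasing and $\le r$. Replacing $r$ by $r'$ only raises the curvature threshold $r^{-2}$, hence only shrinks the set of points constrained; so the assumption above scale $r'$ still holds on $\M\setminus K$, and on $K$ it holds vacuously because for $x\in K$ we have $R(x)\le C_0<r'(\mathfrak{t}(x))^{-2}$. The remaining conditions of Definition~\ref{srf} carry no parameters and should need no modification: $0$-completeness is inherently a condition at infinity (a finite-length path through a bounded-curvature region of a time slice either accumulates in the compact set $K$ or has its tail in $\M\setminus K$, and in either case converges in $\M$); $\M_0$ is compact by hypothesis; and any auxiliary bound such as a lower scalar-curvature bound on compact time intervals follows from the corresponding bound on $\M\setminus K$ together with $R\le C_0$ on $K$.

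The real content is $\kappa$-noncollapsing. It is enough to produce a single $\kappa_0>0$ such that every $x\in K$ is $\kappa_0$-noncollapsed below scale $\rho_0$, for then $\kappa'(t):=\min(\kappa(t),\kappa_0)$ works on $\M$, since decreasing $\kappa$ weakens the assumption. I would argue by contradiction: if no such $\kappa_0$ existed there would be $x_i\in K$, scales $\sigma_i\in(0,\rho_0)$, and $\epsilon_i\to0$ with the parabolic ball $P(x_i,\sigma_i)$ unscathed, $|\Rm|\le\sigma_i^{-2}$ on it, and $\vol\bigl(B(x_i,\sigma_i)\bigr)\le\epsilon_i\sigma_i^n$. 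Pass to a subsequence with $x_i\to x_\infty\in K$ and $\sigma_i\to\sigma_\infty\in[0,\rho_0]$. If $\sigma_\infty=0$, then for large $i$ the ball $B(x_i,\sigma_i)$ sits inside a fixed coordinate chart about $x_\infty$ in which all the nearby time-slice metrics are uniformly comparable to the Euclidean one, so $\vol(B(x_i,\sigma_i))\ge\tfrac12\omega_n\sigma_i^n$, contradicting $\epsilon_i\to0$. If $\sigma_\infty>0$, then $|\Rm|\le2\sigma_\infty^{-2}$ on $B(x_i,\sigma_i)$ for large $i$, and together with $0$-completeness of $\M$ this confines all the closed balls $\overline{B(x_i,\sigma_i)}$ to one compact subset of $\M$; since the time-slice metrics vary continuously there and $x_i\to x_\infty$, one gets $\vol(B(x_i,\sigma_i))\ge\vol(B(x_i,\sigma_\infty/2))\to\vol(B(x_\infty,\sigma_\infty/2))>0$, again contradicting $\vol(B(x_i,\sigma_i))\le\epsilon_i\rho_0^n\to0$.

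The step I expect to be the main obstacle is the precompactness in the case $\sigma_\infty>0$. Unlike the canonical neighborhood assumption, $\kappa$-noncollapsing is semi-global: the balls involved can have definite size, protrude well out of $K$ into the region where the hypotheses are only assumed, and the time slices themselves need not be complete, so compactness of $K$ does not by itself localize the problem. The thing to pin down is that bounded metric balls of bounded curvature based near $K$ stay inside a fixed compact part of the spacetime, uniformly over the sequence --- this is precisely where $0$-completeness of $\M$ enters --- after which continuity of the volume of balls under the smooth variation of the time-slice metrics finishes the argument. It is worth emphasizing that $\kappa_0$, and hence $\kappa'$, may depend on $\M$ and on $K$, with no quantitative control required; this is what lets a soft compactness argument suffice here, in place of a Perelman-type reduced-volume estimate traced back to $\M_0$.
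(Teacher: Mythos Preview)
Your approach is sound for the loose statement but proves strictly less than the paper's precise version (Proposition~\ref{asympprop} and the paragraph introducing Section~\ref{asympcon}). The paper obtains $\kappa'=\kappa'(\epsilon)$ and $r'=r'(\epsilon,r)$ that are \emph{universal}, independent of $\M$ and of $K$, whereas your $\kappa_0$ and your shrunken $r'$ both depend on $\M$ through $C_0=\sup_K R$. You anticipate this in your last sentence, and the paper indeed does what you guess: $\kappa'$-noncollapsing comes from Perelman's reduced volume, after first showing that $L$ is proper and bounded below on earlier time slabs (Proposition~\ref{box}, Corollary~\ref{boxx}) and that each earlier slice contains a point with $l\le\frac32$ (Proposition~\ref{smalll}), then running the standard argument from normalized initial data (Proposition~\ref{newkappa}). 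Moreover the paper does not even assume $\kappa$-noncollapsing outside $K$; it is derived from condition~(a), Hamilton--Ivey pinching, and the $r$-canonical neighborhood assumption alone. With a universal $\kappa'$ available, the paper then obtains a universal $r'$ via the Perelman-style contradiction argument of \cite[Theorem~52.7]{kleiner_lott_perelman_notes} (point selection, bounded curvature at bounded distance, extraction of a $\kappa'$-solution); your threshold-raising trick is simpler but cannot yield a universal $r'$, since over a sequence of spacetimes one could have $C_0\to\infty$.

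Two smaller gaps. You do not handle condition~(b), Hamilton--Ivey pinching: it is a pointwise curvature-cone condition, not a scalar bound, and extending it from $\M\setminus K$ to $K$ requires the vector-valued maximum principle (the paper observes that since violations would be confined to the compact set $K$, the usual argument goes through). And your ``$0$-completeness'' is not the paper's formulation; the relevant hypothesis is condition~(a) of Definition~\ref{srf}, that $R$ is bounded below and proper on $\M_{\le T}$, which is exactly what makes the precompactness step in your $\sigma_\infty>0$ case work.
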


Finally, we estimate the size of the set of
singular times in a singular Ricci flow.

\begin{theorem} \label{t4}
  For any $T < \infty$, the set of times $t \in [0, T]$ for which
  $\M_t$ is noncompact has Minkowski dimension at most $\frac12$.
  \end{theorem}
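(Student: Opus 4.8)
Set $S_T = \{\, t \in [0,T] : \M_t \text{ is noncompact}\,\}$; since $\M_t$ is compact for $t$ below the first singular time, $S_T$ is bounded away from $0$. Because the upper Minkowski dimension is controlled by the packing function, it is enough to prove: for every $\eta > 0$ there is a $C_\eta$ so that, for all small $\rho$, every $\rho^2$-separated subset of $S_T$ has at most $C_\eta\,\rho^{-1-\eta}$ elements. Indeed this bounds the number of intervals of length $\delta \asymp \rho^2$ needed to cover $S_T$ by $O_\eta(\delta^{-\frac12-\eta})$, and letting $\eta \to 0$ gives $\dim S_T \le \tfrac12$.

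The geometric input is that a noncompact time slice carries a definite amount of high-curvature structure at every small scale. If $\M_{t_0}$ is noncompact, then using $\kappa$-noncollapsing, the canonical neighborhood assumption, and Theorem~\ref{t3} (to exclude a complete end of bounded geometry), $\M_{t_0}$ contains an $\eps$-horn; in particular for each scale $\rho$ below a uniform $\rho_0$ there is a point $x_{t_0,\rho} \in \M_{t_0}$ with $R(x_{t_0,\rho}) \asymp \rho^{-2}$ that is the center of a precise $\eps_0$-neck. Since $x_{t_0,\rho}$ lies in a canonical neighborhood, it admits a backward parabolic neighborhood of scale $\asymp \rho$ with controlled rescaled geometry, and going forward the neck thins off into the singular region within time $\Lambda\rho^2$; this produces a spacetime neck region $\mathcal N(t_0,\rho) \subset \t^{-1}\big([t_0 - \Lambda\rho^2,\, t_0 + \Lambda\rho^2]\big)$ on which $R \asymp \rho^{-2}$ and whose time slices have volume $\gtrsim \rho^3$. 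Consequently, if $t_0, t_0' \in S_T$ satisfy $|t_0 - t_0'| \ge 4\Lambda\rho^2$ then $\mathcal N(t_0,\rho)$ and $\mathcal N(t_0',\rho)$ lie in disjoint time slabs, so a $\rho^2$-separated subset of $S_T$ (after discarding a bounded proportion) yields a disjoint family of such neck regions.

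It remains to count these regions, and this is where the exponent $\tfrac12$ --- and the main difficulty --- resides. A crude bound by total spacetime (or fixed-time) volume only gives $O(\rho^{-3})$ regions, which is far from enough. The improvement to $O_\eta(\rho^{-1-\eta})$ should come from the \emph{sharp} form of the $L^p$ statement in Theorem~\ref{t1}, namely an estimate of the type $\vol\big(\{x \in \M_t : R(x) \ge s\}\big) = O_\eta\big(s^{-1+\eta}\big)$ uniformly in $t \in [0,T]$ (equivalently, at most $O_\eta(\rho^{-1-\eta})$ disjoint scale-$\rho$ necks in a single time slice), combined with Theorem~\ref{t2} to control how volume migrates between scales along the flow. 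The plan is to transport the neck regions $\mathcal N(t_0,\rho)$ along the flow to a common reference time --- under the backward flow the cross-sectional scale of such a neck grows like $\sqrt{s}$ over a time $s$ --- so that the many necks produced by a $\rho^2$-separated family appear simultaneously, at controlled (generally different) scales, as a disjoint collection of high-curvature regions in one slice, whose total volume the sharp estimate then limits; the bookkeeping of the scales at the reference time is precisely what upgrades $\rho^{-3}$ to $\rho^{-1-\eta}$. The principal obstacle is that these backward worldlines need not remain necks all the way to the reference time: they may enter cap regions, merge, or be cut off by unrelated singular behavior, so one needs a formulation of the counting argument that is robust under such degenerations, and it is here that the combined strength of Theorems~\ref{t1} and \ref{t2}, together with the canonical neighborhood structure, is needed.
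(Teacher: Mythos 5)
Your reduction to a packing bound is sound, and you correctly identify both the right target (at most $O_\eta(\rho^{-1-\eta})$ points in a $\rho^2$-separated subset) and the fact that a naive volume count of scale-$\rho$ neck regions falls far short. But the decisive step --- the mechanism that actually produces the exponent $\tfrac12$ --- is not carried out. Your plan is to transport the neck regions $\mathcal N(t_0,\rho)$ backward to a common reference time and then invoke a single-slice superlevel-set bound $\vol\{R\ge s\}=O_\eta(s^{-1+\eta})$ (essentially Theorem~\ref{t1} via Chebyshev) together with Theorem~\ref{t2}. To make that work you would need to know at what scales the transported regions arrive at the reference slice, that they remain disjoint there, and that the transport is controlled over a time span that is \emph{not} small compared to $\rho^2$ --- and you explicitly flag that the worldlines may leave the neck regime, merge, or be cut off, without resolving this. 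As written, the counting argument is a program with an acknowledged unfilled hole, so the proof is incomplete precisely where the theorem's content lies.

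For contrast, the paper avoids any common reference time. To each selected point $x_i$ with $R(x_i)=A=\rho^{-2}$ it attaches (Lemma~\ref{lem_controlled_product_domain}) a \emph{backward product domain} $U_i$ defined on a time interval of the fixed, $\rho$-independent length $C^{-1}r^2(T)$, obtained from neck stability (or the compact/cap case analysis) as in the proof of Proposition~\ref{prop_r_in_lp}. Two estimates on $U_i$ do all the work: (i) the Jacobian estimate behind Theorem~\ref{t1} shows the time slices of $U_i$ expand from volume $\sim\rho^3$ up to $\gtrsim r^{2-\la}(T)\rho^{1+\la}$ once the curvature has relaxed to the canonical scale, so the spacetime volume of $U_i$ is $\gtrsim r^{4-\la}(T)\,\rho^{1+\la}=r^{4-\la}(T)A^{-\frac12(1+\la)}$ rather than $\rho^5$; and (ii) the two-sided linear growth of $R^{-1}$ along $U_i$ forces $|t_i-t_j|\lesssim A^{-1}$ whenever $U_i\cap U_j\neq\emptyset$, so the family $\{U_i\}$ has bounded overlap multiplicity. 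A single comparison with the total spacetime volume then gives $|I|\lesssim A^{\frac12(1+\la)}$ and hence dimension $\le\tfrac12+\tfrac{\la}{2}$. If you want to complete your argument, replacing your thin slab $\mathcal N(t_0,\rho)$ (time extent $\Lambda\rho^2$) by such a backward product domain of fixed time extent, and proving the volume-expansion and disjointness statements, is the missing content; Theorem~\ref{t2} is not needed.
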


 The structure of the paper is as follows.
In Section \ref{sec2} we recall some notation and
terminology from \cite{kleiner_lott_singular_ricci_flows}. In Section \ref{sec3} we prove
some needed results about compact $\kappa$-solutions.
Section \ref{sec_curvature_volume_estimates}
has the proofs of Theorems \ref{t1} and \ref{t2}.
In Section \ref{asympcon} we prove Theorem \ref{t3} and in
Section \ref{sec6} we prove Theorem \ref{t4}.

\section{Notation and terminology} \label{sec2}

We will assume some familiarity with \cite{kleiner_lott_singular_ricci_flows}, but in order to make this paper as self-contained as possible, we will give precise references for all results from \cite{kleiner_lott_singular_ricci_flows} that are used here.
We follow the notation and terminology of \cite{kleiner_lott_singular_ricci_flows}.
All manifolds that arise will be taken to be orientable.
A {\em $\kappa$-solution} is a special type of ancient Ricci flow
solution, for which we refer to \cite[Appendix A.5]{kleiner_lott_singular_ricci_flows}.
The function $r : [0, \infty) \rightarrow (0, \infty)$ is the parameter
$r$ of the canonical neighborhood assumption \cite[Appendix A.8]{kleiner_lott_singular_ricci_flows}.

\begin{definition} \label{def_ricci_flow_spacetime}
  A {\em Ricci flow spacetime} is a tuple $(\M,\t,\D_{\t},g)$ where:
  \begin{itemize}
  \item $\M$ is a smooth manifold-with-boundary.
  \item
    $\t$ is the {\em time function} -- a submersion
    $\t:\M\ra I$ where $I\subset\R$ is  a time interval;
    we will usually take $I=[0,\infty)$.
    \item
      The boundary of
      $\M$, if it is nonempty,
      corresponds to the endpoint(s) of the
      time interval: $\D\M=\t^{-1}(\D I)$.
    \item $\D_{\t}$ is the {\em time vector field}, which satisfies $\D_{\t}\t\equiv 1$.
    \item  $g$ is a smooth inner product on the spatial subbundle
      $\ker(d\t)\subset T\M$, and $g$
      defines a Ricci flow: ${\mathcal L}_{\D_{\t}} g = - 2 \ric(g)$.
  \end{itemize}
\end{definition}

For $0\leq a< b$, we write $\M_a=\t^{-1}(a)$, $\M_{[a,b]}=\t^{-1}([a,b])$ and
$\M_{\leq a}=\t^{-1}([0,a])$.
Henceforth, unless otherwise specified, when we refer to geometric quantities such as
curvature, we will implicitly be referring to the metric on the time slices.

\begin{definition}
\label{srf}
A Ricci flow spacetime
$(\M,\t,\D_{\t},g)$
is a {\em singular Ricci flow} if it is $4$-dimensional,
the initial time
slice $\M_0$ is a compact normalized Riemannian manifold
and
\begin{enumerate}
  \renewcommand{\theenumi}{\alph{enumi}}
\item The scalar curvature function $R:\M_{\leq T}\ra \R$ is
  bounded below and proper
  for all $T\geq 0$.
\item $\M$ satisfies the Hamilton-Ivey pinching condition
  of \cite[(A.14)]{kleiner_lott_singular_ricci_flows}.
\item
  For a global parameter $\epsilon > 0$ and
  decreasing
  functions $\kappa, r:[0,\infty)\ra (0,\infty)$, the spacetime
    $\M$ is
    $\kappa$-noncollapsed below scale $\epsilon$ in the sense of
    \cite[Appendix A.4]{kleiner_lott_singular_ricci_flows}
    and satisfies the $r$-canonical neighborhood assumption
    in the sense of \cite[Appendix A.8]{kleiner_lott_singular_ricci_flows}.
\end{enumerate}
\end{definition} 

Here ``normalized'' means that
at each point $m$ in the initial time slice,
the eigenvalues of the curvature operator $\Rm(m)$ are bounded by one in
absolute value, and the volume of the unit ball $B(m,1)$ is at least
half the volume of the Euclidean unit ball.
By rescaling, any
compact Riemannian manifold can be normalized.
 ``Proper'' has the usual meaning, that the preimage of a compact set is compact.  Since $R$ is bounded below, its properness means that as one goes out an end of $\M_{\leq T}$, the function $R$ goes to infinity.

Let $(\M, \t, \partial_{\t}, g)$ be a Ricci flow spacetime
(Definition \ref{def_ricci_flow_spacetime}).
For brevity, we will often write $\M$ for the quadruple.

Given $s > 0$, the rescaled
Ricci flow spacetime is
$\hat \M(s) = (\M, \frac{1}{s} \t, s \partial_{\t}, \frac{1}{s} g)$.

\begin{definition}
  \label{def_worldline}
  Let $\M$ be a Ricci flow spacetime.
  A path $\ga:I\ra\M$ is {\em time-preserving} if $\t(\ga(t))=t$ for  all
  $t\in I$.
  The {\em worldline} of a point
  $m\in\M$ is the maximal time-preserving integral curve $\ga:I\ra \M$ of the
  time vector field $\D_{\t}$, which  passes through $m$.
\end{definition}

If $\ga:I\ra \M$ is a worldline then we may have $\sup I <\infty$.   In this case,
the scalar curvature blows up along $\ga(t)$ as $t\ra \sup I$, and the worldline
encounters a singularity.  An example would be a shrinking round space form, or a
neckpinch.
A worldline may also encounter a singularity going backward
in time.

\begin{definition}
  A worldline $\ga:I\ra\M$ is {\em bad} if $\inf I>0$, i.e.  if it is not
  defined at $t=0$.
  \end{definition}

Given $m \in M_t$, we write $B(m, r)$ for the open metric
ball of radius $r$ in $\M_t$. We write $P(m,r,\Delta t)$ for the
parabolic neighborhood, i.e. the set of points
$m^\prime$ in $\M_{[t, t + \Delta t]}$ if $\Delta t > 0$
(or $\M_{[t + \Delta t, t]}$ if $\Delta t < 0$) that lie on the
worldline of some point in $B(m,r)$.
We say that $P(m,r, \Delta t)$ is {\em unscathed} if
$B(m,r)$ has compact closure in $\M_t$ and
for every
$m^\prime \in P(m,r, \Delta t)$, the maximal worldline $\gamma$ through
$m^\prime$ is defined on a time interval containing $[t, t+\Delta t]$
(or $[t+\Delta t, t]$).
We write $P_+(m,r)$ for the
forward parabolic ball $P(m,r,r^2)$ and
$P_-(m,r)$ for the
backward parabolic ball $P(m,r,-r^2)$. 

\section{Compact $\kappa$-solutions} \label{sec3}

In this section we prove some structural results about
compact $\kappa$-solutions.
The main result of this section, Corollary
\ref{corcloseness}, will be used in the proof of
Proposition \ref{prop_r_in_lp}.

We recall from \cite[Appendix A.5]{kleiner_lott_singular_ricci_flows}
that if $\M$ is a $\kappa$-solution
then $\M_{t, \hat\eps}$ denotes the points in $\M_t$ that are not
centers of $\hat\eps$-necks.

\begin{lemma} \label{lemcloseness}
  There is some $\overline{\epsilon} > 0$ so that
  for any
  $0 < \hat\eps < \overline{\epsilon}$, there are
  $\epsilon^\prime = \epsilon^\prime(\hat\eps) > 0$ and
  $\alpha = \alpha(\hat\eps) < \infty$
  with the following property.
  Let $\M$ be a compact $\kappa$-solution.  Suppose that
  $\M_t$ contains an $\epsilon^\prime$-neck.  Then there are points
  $m_1, m_2 \in \M_t$ so that $\M_{t,\hat\eps}$ is
  covered by disjoint balls
  $B(m_1, \alpha R(m_1, t)^{- \frac12})$ and
  $B(m_2, \alpha R(m_2, t)^{- \frac12})$, whose intersections with
  $\M_t - \M_{t, \hat\eps}$ are nonempty.
\end{lemma}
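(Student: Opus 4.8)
The plan is to use the known classification of compact $\kappa$-solutions together with the structure of $\kappa$-solutions that contain necks. Recall that a compact $\kappa$-solution on a $3$-manifold is, topologically, either a spherical space form or $S^3$, $\R P^3$, $S^2 \times S^1$, or $\R P^3 \# \R P^3$; and among these, the ones that develop long necks are the ``tube-like'' ones. More precisely, if $\M_t$ contains an $\epsilon'$-neck for $\epsilon'$ small, then $\M$ cannot be a shrinking round space form (those have no necks at any scale once $\epsilon'$ is below a universal constant), so $\M_t$ is diffeomorphic to $S^3$, $\R P^3 \# \R P^3$, or $S^2\times S^1$, and in all cases $\M_t$ has a ``double cap'' structure: a long neck region in the middle, capped off on each side by a region of bounded geometry (at the curvature scale). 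The first step is to make this precise: fix $\overline\epsilon$ so that below it the round space forms are excluded, and invoke the compactness/structure theory for $\kappa$-solutions to extract, for each small $\hat\eps$, the two ``cap points'' $m_1, m_2$ — the centers of the two regions that are not centers of $\hat\eps$-necks. Equivalently, $\M_{t,\hat\eps}$ has exactly two connected components (for $\epsilon'$ small enough depending on $\hat\eps$), one near each end of the neck, and we let $m_i$ be a point in the $i$-th component, chosen say to (approximately) maximize scalar curvature there.

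The second step is to get the scale-invariant diameter bound: each component of $\M_{t,\hat\eps}$, together with a definite collar of the adjacent neck, has diameter at most $\alpha(\hat\eps) R(m_i,t)^{-1/2}$, with curvature comparable to $R(m_i,t)$ throughout. This follows from the compactness theorem for $\kappa$-solutions: if no such $\alpha$ existed, one would get a sequence of pointed compact $\kappa$-solutions, rescaled so that $R(m_i,t) = 1$, in which the ``cap'' component of the complement of the $\hat\eps$-neck set has diameter going to infinity; passing to a smooth limit $\kappa$-solution, the limit would be a noncompact $\kappa$-solution that is $\hat\eps$-neck-free on an unbounded set but is not itself (a piece of) a round cylinder or the Bryant-type soliton in the right way — and one derives a contradiction with the known list of noncompact $\kappa$-solutions (the limit must be a round cylinder, forcing every point to be a center of an $\hat\eps$-neck, contradicting that the cap points survive in the limit). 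This also pins down that $R(m_1,t)$ and $R(m_2,t)$ are comparable to each other, and that the two balls $B(m_i,\alpha R(m_i,t)^{-1/2})$, taken with the same $\alpha$, are large enough to contain their respective components of $\M_{t,\hat\eps}$ yet can be arranged to be disjoint (separated by the middle of the long neck, whose length in rescaled units is as large as we like by taking $\epsilon'$ small).

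The third step is bookkeeping: choose $\epsilon'(\hat\eps)$ small enough that the neck guaranteed by the hypothesis is long enough to (i) force $\M$ into the tube-like case, (ii) ensure $\M_{t,\hat\eps}$ has exactly two components sitting at scale-bounded distance from the ends, and (iii) leave room between the two balls for disjointness; and choose $\alpha(\hat\eps)$ from the compactness argument of step two, enlarging it if necessary so that $B(m_i,\alpha R(m_i,t)^{-1/2})$ covers the $i$-th component of $\M_{t,\hat\eps}$. Finally, note each ball meets $\M_t - \M_{t,\hat\eps}$: since the cap component has scale-bounded diameter but the whole manifold contains a long neck, the ball of radius $\alpha R(m_i,t)^{-1/2}$ around $m_i$ necessarily pokes into the neck region, i.e. into the set of $\hat\eps$-neck centers, for $\alpha$ large.

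The main obstacle I anticipate is step two: extracting the uniform constant $\alpha(\hat\eps)$ via a contradiction/compactness argument requires care about what the limiting $\kappa$-solution can be and why a ``large cap'' is impossible — one has to rule out, for instance, the possibility that the rescaled limit is the Bryant soliton with a genuinely large non-neck region, which it is not once $\hat\eps$ is below a universal threshold, but matching the neck parameter $\hat\eps$ in the limit with the $\epsilon'$-neck hypothesis along the sequence needs the standard ``$\epsilon$-neck is an open condition under Cheeger-Gromov convergence'' lemma, applied with some attention to basepoints. Everything else is a fairly direct consequence of the classification of $\kappa$-solutions and of Hamilton's compactness theorem as recorded in \cite[Appendix A.5]{kleiner_lott_singular_ricci_flows}.
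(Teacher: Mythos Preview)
Your plan would lead to a correct proof, but it takes a longer route than the paper. The paper does not run a fresh compactness/contradiction argument to bound the cap diameters; instead it simply invokes \cite[Corollary~48.1]{kleiner_lott_perelman_notes}, which already supplies the constant $\alpha=\alpha(\hat\eps)$ together with the dichotomy: either some $x\in\M_{t,\hat\eps}$ satisfies $R(x)\diam^2(\M_t)<\alpha$, or $\M_{t,\hat\eps}$ is covered by two balls $B(m_i,\alpha R(m_i,t)^{-1/2})$. The paper then observes that the first alternative forces a uniform bound on $(\sup_{\M_t}R)\cdot\diam^2(\M_t)$ (by compactness of the space of pointed $\kappa$-solutions), which is incompatible with an $\eps'$-neck once $\eps'$ is small; so we are in the second alternative, and a separating cross-section of the $\eps'$-neck forces the two balls onto opposite sides, hence disjoint. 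Your steps~1--2 amount to reproving the content of Corollary~48.1 from scratch; this is legitimate, and your worry about a Bryant-type limit in step~2 is exactly the sort of bookkeeping that citing the corollary lets you skip (the contradiction there is just that \emph{every} noncompact $\kappa$-solution has bounded non-neck set, not that the limit must be a cylinder).

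One correction to your sketch: compact $\kappa$-solutions are diffeomorphic to spherical space forms. A zero in the curvature operator would force a local splitting with universal cover the shrinking $S^2\times\R$, and compact quotients of that fail $\kappa$-noncollapsing at very negative times; so the curvature is strictly positive and Hamilton applies. Neither $S^2\times S^1$ nor $\R P^3\#\R P^3$ occurs. This error is harmless for your argument, which is really metric rather than topological.
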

\begin{proof}
  Let $\alpha = \alpha(\hat\eps)$ be the parameter
  of \cite[Corollary 48.1]{kleiner_lott_perelman_notes}.
  Suppose that there is some point $x \in  \M_{t, \hat\eps}$ so that
  $R(x) \diam^2(\M_t) < \alpha$. By the compactness of the space of
  pointed $\kappa$-solutions, it follows that there is an
  upper bound on $(\sup_{\M_t} R) \cdot \diam^2(\M_t)$, depending
  only on $\alpha$.
  If $\epsilon^\prime$ is sufficiently small then we obtain a contradiction.
  Hence we are in case C of
  \cite[Corollary 48.1]{kleiner_lott_perelman_notes}, so there are points
  $m_1, m_2 \in \M_t$ such that
  $M_{t,\hat\eps} \subset
  B(m_1, \alpha R(m_1, t)^{- \frac12}) \cup
  B(m_2, \alpha R(m_2, t)^{- \frac12})$.
  If $\epsilon^\prime$ is sufficiently small
  then a cross-section of the
  $\epsilon^\prime$-neck separates $\M_t$ into two connected components, each
  of which must have a cap region.
  Hence if $\epsilon^\prime$ is sufficiently small then
  $B(m_1, \alpha R(m_1, t)^{- \frac12})$ and
  $B(m_2, \alpha R(m_2, t)^{- \frac12})$ are disjoint.
  As $\M_{t, \hat\eps}$ is closed,
  both $B(m_1, \alpha R(m_1, t)^{- \frac12})$ and
  $B(m_2, \alpha R(m_2, t)^{- \frac12})$
  intersect $\M_t - \M_{t,\hat\eps}$.
\end{proof}

\begin{lemma} \label{pointedgradient}
  Given $\hat\eps > 0$ and
  a compact family ${\mathcal F}$ of pointed $\kappa$-solutions, with
  basepoints at time zero, there
  is some $T = T(\hat\eps, {\mathcal F}) < 0$ such that for each
  $\M \in {\mathcal F}$, there is a point
  $(m,t) \in \M$ with $t \in [-T, 0]$ so that
  $\left( \hat{\M}(-t), m \right)$ is $\hat\eps$-close to a
  pointed gradient shrinking soliton which is a $\kappa$-solution.
\end{lemma}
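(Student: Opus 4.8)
The plan is to argue by contradiction using Perelman's result that every $\kappa$-solution, after suitable rescaling about points going backward in time, subconverges to a gradient shrinking soliton, together with a compactness argument to make the relevant times uniform over the family $\F$. First I would recall the qualitative statement (see \cite[Section 39]{kleiner_lott_perelman_notes} or \cite[Appendix A.5]{kleiner_lott_singular_ricci_flows}): for a single $\kappa$-solution $\M$, if one picks a sequence $t_i \to -\infty$ and considers the parabolic rescalings $\hat\M(-t_i)$ pointed at appropriate points, then a subsequence converges smoothly to a pointed gradient shrinking soliton that is itself a $\kappa$-solution. In particular, for each fixed $\hat\eps > 0$ and each individual $\M$, there exists a time $t_\M < 0$ and a point $m_\M$ realizing the $\hat\eps$-closeness asserted in the lemma. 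The content of the lemma is the \emph{uniformity} of the bound $t_\M \geq -T$ as $\M$ ranges over the compact family $\F$.

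Next I would suppose the conclusion fails: then there is a sequence $\M^{(j)} \in \F$ such that no point $(m,t) \in \M^{(j)}$ with $t \in [-j, 0]$ has $(\hat\M^{(j)}(-t), m)$ being $\hat\eps$-close to a pointed gradient shrinking $\kappa$-soliton. By compactness of $\F$ (with basepoints at time zero), pass to a subsequence so that $\M^{(j)}$ converges in the pointed smooth topology to some $\M^{(\infty)} \in \F$, a $\kappa$-solution. Now apply the single-flow statement to $\M^{(\infty)}$: there is a time $t_\infty < 0$ and a point $m_\infty \in \M^{(\infty)}_{t_\infty}$ such that $(\hat\M^{(\infty)}(-t_\infty), m_\infty)$ is, say, $\tfrac{\hat\eps}{2}$-close to a pointed gradient shrinking $\kappa$-soliton. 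By smooth convergence $\M^{(j)} \to \M^{(\infty)}$, for $j$ large there are points $m_j \in \M^{(j)}_{t_\infty}$ (with the same fixed time $t_\infty$, which we may assume lies in $[-j,0]$ once $j \geq -t_\infty$) converging to $m_\infty$, and the rescaled pointed flows $(\hat\M^{(j)}(-t_\infty), m_j)$ converge smoothly to $(\hat\M^{(\infty)}(-t_\infty), m_\infty)$. Hence for $j$ large, $(\hat\M^{(j)}(-t_\infty), m_j)$ is $\hat\eps$-close to the same soliton, contradicting the choice of the sequence $\M^{(j)}$. This contradiction proves the lemma, with $T$ depending only on $\hat\eps$ and $\F$.

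One technical point to pin down is what notion of ``$\hat\eps$-close to a pointed gradient shrinking soliton'' is being used — presumably closeness of the pointed parabolic neighborhood $P(m, \hat\eps^{-1}, -\hat\eps^{-1})$ (or the ball $B(m,\hat\eps^{-1})$ at the rescaled scale) in the $C^{\lfloor \hat\eps^{-1}\rfloor}$-topology — and to check that this notion is open and is stable under smooth pointed convergence of the ambient flows, which is what makes the last step of the contradiction go through. A second point: one must ensure the worldline through the approximating point $m_j$ is defined on the relevant backward time interval so that the comparison with the (complete ancient) soliton makes sense; for $\kappa$-solutions this is automatic since they are ancient and the relevant region has bounded geometry after rescaling, and it transfers to $\M^{(j)}$ for large $j$ by the smooth convergence.

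The main obstacle is the compactness input: one needs that a compact family $\F$ of pointed $\kappa$-solutions is compact in a topology strong enough ($C^\infty$ on parabolic neighborhoods of the basepoints, using $\kappa$-noncollapsing and the bounded-curvature-at-bounded-distance property of $\kappa$-solutions) that smooth convergence $\M^{(j)} \to \M^{(\infty)}$ can be combined with the single-flow soliton statement for $\M^{(\infty)}$ at a \emph{fixed} backward time. Once that is granted, the argument is a routine contradiction-and-compactness wrap-up; the role of the single-flow result of Perelman is essentially a black box here.
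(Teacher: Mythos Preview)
Your proposal is correct and follows essentially the same argument as the paper: assume failure, extract a sequence $\M^{(j)}$ violating the conclusion on $[-j,0]$, use compactness of $\F$ to pass to a limit $\M^{(\infty)}$, invoke Perelman's asymptotic soliton for $\M^{(\infty)}$ to find a point $(m_\infty,t_\infty)$ that is $\tfrac{\hat\eps}{2}$-close to a shrinker, and then transfer this back to $\M^{(j)}$ for large $j$ via smooth convergence to derive a contradiction. The paper's proof is exactly this, stated more tersely and without your additional remarks on the closeness notion and stability under convergence.
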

\begin{proof}
  Suppose that the lemma fails.  Then for each $j \in \Z^+$, there is some
  $\M^j \in {\mathcal F}$ so that for each $(m, t) \in \M^j$ with
  $t \in [-j, 0]$,
  there is no pointed gradient shrinker (which is a $\kappa$-solution)
  that is $\hat\eps$-close to
  $\left( \hat{\M^j}(-t), m \right)$. After passing to a subsequence,
  we can assume that $\lim_{j \rightarrow \infty} \M^j = \M^\infty
  \in {\mathcal F}$.
  From the existence of an asymptotic soliton for $\M^\infty$, there is
  some $(m_\infty, t_\infty) \in \M^\infty$ so that
  $\left( \hat{\M}^\infty(-t_\infty), m_\infty \right)$ is
  $\frac{\hat\eps}{2}$-close to
  a gradient shrinking soliton (which is a $\kappa$-solution).
  Then for large $j$, there is some
  $(m_j, t_\infty) \in \M^j$ so that
  $\left( \hat{\M}^j(-t_\infty), m_j \right)$ is $\hat\eps$-close to the
  gradient shrinking soliton.  This is a contradiction.
\end{proof}

\begin{corollary} \label{corcloseness}
  Let
  ${\mathcal F}$ be a compact family of compact $\kappa$-solutions
  that does not have any constant curvature elements.
  Then for each $\hat\eps > 0$,
  there is some ${\mathcal T} = {\mathcal T}(\hat\eps, {\mathcal F}) < 0$
  such that for each
  $\M \in {\mathcal F}$, there is a point
  $(m,t) \in \M_{[{\mathcal T}, 0]}$ which is the center of an $\hat\eps$-neck.
\end{corollary}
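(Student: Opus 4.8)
The plan is to argue by contradiction and compactness, and then invoke the canonical-neighborhood structure theory for $3$-dimensional $\kappa$-solutions to force a hypothetical ``neck-free'' limit to have constant curvature.

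\medskip

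Suppose the conclusion fails for some $\hat\eps>0$. Applying the negation with $\mathcal{T}=-j$ gives, for each $j\in\Z^+$, some $\M^j\in\F$ such that no point of $\M^j$ lying at a time $t\in[-j,0]$ is the center of an $\hat\eps$-neck, i.e. $\M^j_{t,\hat\eps}=\M^j_t$ for all $t\in[-j,0]$. By compactness of $\F$ I may pass to a subsequence with $\M^j\to\M^\infty\in\F$ in the smooth topology; since the $\M^j$ and $\M^\infty$ are compact, for each compact time interval $[-A,0]$ there are diffeomorphisms realizing $C^\infty$-convergence of $\M^j_{[-A,0]}$ to $\M^\infty_{[-A,0]}$. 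Being the center of an $\hat\eps'$-neck is a $C^{[1/\hat\eps']}$-closeness condition (after rescaling) between a fixed-size neighborhood and the standard cylinder, hence stable under $C^{[1/\hat\eps']}$-small perturbations; so a center of an $\hat\eps'$-neck in $\M^\infty_{t^*}$ with $t^*\le 0$ and $\hat\eps'\in(0,\hat\eps)$ would produce, for large $j$, a center of an $\hat\eps$-neck in $\M^j_{t^*}$, contradicting the choice of $\M^j$ once $j>|t^*|$. Thus $\M^\infty$ has no center of an $\hat\eps'$-neck at any time $t\le 0$, for every $\hat\eps'\in(0,\hat\eps)$; and since $\F$ has no constant-curvature element, $\M^\infty$ is not a shrinking round spherical space form.

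\medskip

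Next I apply the structure theory of $3$-dimensional $\kappa$-solutions (\cite{kleiner_lott_perelman_notes}, \cite[Appendix A]{kleiner_lott_singular_ricci_flows}): there is a universal $\bar\eps>0$ such that for $\hat\eps'<\bar\eps$, a compact connected time slice $\M^\infty_t$ either contains the center of an $\hat\eps'$-neck or is $\hat\eps'$-close, after rescaling, to a round spherical space form (an $\hat\eps'$-cap in such a slice always contains an $\hat\eps'$-neck center). By the previous paragraph the first alternative never occurs, so for every $t\le 0$ and every $\hat\eps'<\min(\bar\eps,\hat\eps)$ the slice $\M^\infty_t$ is $\hat\eps'$-close, after rescaling, to a round spherical space form. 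Its diffeomorphism type is fixed and the rescaling is pinned down by normalizing the volume, so letting $\hat\eps'\downarrow 0$ shows each $\M^\infty_t$, $t\le 0$, is isometric to a round spherical space form. A Ricci flow all of whose time slices are round spherical space forms is a shrinking round spherical space form, so $\M^\infty$ has constant curvature --- contradicting $\M^\infty\in\F$. This proves the corollary.

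\medskip

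The main obstacle is the structure-theory input of the second paragraph --- equivalently, the rigidity that a compact $3$-dimensional $\kappa$-solution whose time slices contain no necks must have constant curvature (which in turn rests on the classification of compact $3$-dimensional gradient shrinking solitons as round, together with Perelman's fact that a $\kappa$-solution with compact asymptotic soliton equals that soliton). An alternative organization replaces $\F$ by the compact family $\{(\M,m):\M\in\F,\ m\in\M_0\}$ of pointed $\kappa$-solutions and feeds it to Lemma \ref{pointedgradient}, obtaining a uniform $T<0$ and, for each $\M$, a time $t\in[T,0]$ at which $\hat\M(-t)$ is close to a pointed gradient shrinking $\kappa$-soliton: if that soliton is noncompact (i.e. $S^2\times\R$ or $(S^2\times\R)/\Z_2$) one reads off an $\hat\eps$-neck directly when the closeness parameter is small, while the compact round case is excluded by the same rigidity together with the uniform curvature bounds on $\F$ (which also keep the factors $-t$ bounded away from $0$). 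Either way one must be careful with the quantitative stability constants in the neck/cap dichotomy and with the bookkeeping of rescalings.
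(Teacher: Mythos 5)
Your first paragraph (the contradiction/compactness setup producing a limit solution $\M^\infty\in\F$ with no $\hat\eps'$-neck centers at any time $t\le 0$) is fine. The gap is in the second paragraph: the dichotomy you invoke --- ``a compact time slice of a $\kappa$-solution either contains the center of an $\hat\eps'$-neck or is $\hat\eps'$-close, after rescaling, to a round spherical space form'' --- is not what the structure theory gives, and it is false. The correct statement (\cite[Corollary 48.1]{kleiner_lott_perelman_notes}, which is what Lemma \ref{lemcloseness} uses) says that when the neck/cap alternatives fail, the slice merely has \emph{bounded normalized diameter} $R\cdot\diam^2\le\alpha(\eps)$; it does not say the slice is close to round, let alone with the same closeness parameter $\hat\eps'$. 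Perelman's rotationally symmetric non-round compact ancient solution has time slices (near its extinction time) with small normalized diameter, hence no $\eps'$-neck centers for any small $\eps'$, which are nevertheless not $\sigma$-close to round for small fixed $\sigma$. Consequently letting $\hat\eps'\downarrow 0$ does not show that each $\M^\infty_t$ is exactly round, and your contradiction is not reached. Slice-by-slice reasoning cannot work here: the rigidity ``no necks at any time $\Rightarrow$ constant curvature'' is an inherently dynamic statement, proved by going backward in time to the asymptotic gradient shrinking soliton.

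That dynamic argument is exactly what the paper does, and it is the route you relegate to an ``alternative organization'' in your closing paragraph: apply Lemma \ref{pointedgradient} to the compact family to get, uniformly in $\M\in\F$, a time $t\in[{\mathcal T},0]$ at which $\bigl(\hat\M(-t),m\bigr)$ is $\eps'$-close to a gradient shrinking $\kappa$-soliton. The compact (round) soliton is then excluded not by abstract ``rigidity'' but by forward stability of constant positive curvature: if $\M$ were $\eps'$-close to round at some $t\in[{\mathcal T},0]$, it would be $\sigma$-close to round at $t=0$, contradicting the choice of $\sigma=\sigma(\F)>0$ coming from the hypothesis that $\F$ has no constant curvature elements. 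The remaining solitons are the round shrinking cylinder and its $\Z_2$-quotient, each of which yields an $\hat\eps$-neck center (after moving the basepoint in the quotient case). You should promote that sketch to the main proof and supply the stability step; as written, your primary argument does not close.
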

\begin{proof}
  By assumption, there is some $\sigma = \sigma({\mathcal F}) > 0$ so that
  no time-zero slice of an element of ${\mathcal F}$
  is $\sigma$-close to a constant curvature manifold.
  By Lemma \ref{pointedgradient}, for each $\eps^\prime > 0$,
  there is some ${\mathcal T} = {\mathcal T}(\eps^\prime, {\mathcal F}) < 0$
  such that for each
  $\M \in {\mathcal F}$, there is some
  $(m,t) \in \M_{[{\mathcal T}, 0]}$ so that
  $\left( \hat{\mathcal M}(-t), m \right)$
  is $\eps^\prime$-close to a gradient
  shrinking soliton (which is a $\kappa$-solution).
  If $\eps^\prime$ is sufficiently small, in terms
  of $\sigma$, then by the local stability of Ricci flows of constant positive
  curvature, this soliton cannot have constant curvature.  Hence
  it is either a round shrinking cylinder or a $\Z_2$-quotient of a
  round shrinking cylinder. If it is a round shrinking cylinder then
  as long as $\eps^\prime \le \hat\eps$, we are done.
  If it is a $\Z_2$-quotient of a round shrinking cylinder then if
  $\eps^\prime$ is sufficiently small, by moving the basepoint
  we can find a point
  $(m^\prime, t) \in \M$ that is the center of an $\eps$-neck.
\end{proof}

\section{Curvature and volume estimates}
\label{sec_curvature_volume_estimates}

In this section we establish curvature and volume estimates
for singular Ricci flows.  
There are two main results. In
Proposition \ref{prop_r_in_lp}, we show that
$|R|^p$ is integrable on each time slice, for each $p\in(0,1)$.
In Proposition \ref{holder} we give an estimate on how the volume 
$\V(t)$ can change
as a function of $t$. When combined with
part (5) of \cite[Proposition 5.5]{kleiner_lott_singular_ricci_flows}, it shows that 
$\V(t)$ is $\frac{1}{\eta}$-H\"older in $t$,
where $\eta \ge 1$ is the constant in
the estimate
\begin{equation}
  \label{gradest}
  |\nabla R(x, t)|<\eta R(x, t)^{\frac32}, \: \: \:  \: \: \: \:
  \left|\frac{\D R}{\D t} (x, t)\right|<\eta R(x, t)^2,
\end{equation}
for canonical neighborhoods from
\cite[(A.8)]{kleiner_lott_singular_ricci_flows}.

\begin{proposition}
\label{prop_r_in_lp} 
Let $\M$ be a singular Ricci flow. Then for all $p \in (0,1)$ and
$T < \infty$, there is a bound
\begin{equation}
\label{eqn_r_in_lp}
\int_{\M_t} |R|^p \: \dvol_{g(t)} \le
\const (p,T) \vol_{g(0)}(\M_0)
\end{equation}
for all $t \in [0, T]$.
\end{proposition}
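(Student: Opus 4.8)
The plan is to bound the integral of $|R|^p$ by combining a decomposition of $\M_t$ according to the canonical neighborhood structure with a simple scaling/covering argument. The key point is that wherever $R$ is large---specifically, where $R > r(t)^{-2}$---the canonical neighborhood assumption gives us very precise geometric control: each such point lies in an $\epsilon$-neck, an $\epsilon$-cap, or a compact component modeled on a $\kappa$-solution. In each of these regions, the volume of the part where the curvature is comparable to a given large value $\Lambda$ is controlled by $\Lambda^{-\frac32}$ times a combinatorial factor, because necks and caps at curvature scale $\Lambda$ have cross-sectional area $\sim \Lambda^{-1}$ and controlled longitudinal extent, and because $\kappa$-solutions have bounded geometry after rescaling. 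Summing a dyadic decomposition $R \in [2^k, 2^{k+1})$ then yields a convergent geometric series precisely when $p < 1$, since $\int^\infty \Lambda^{p} \, d(\Lambda^{-\frac32}) \sim \int^\infty \Lambda^{p - \frac52}\, d\Lambda < \infty$ for $p < \frac32$.

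First I would recall from \cite{kleiner_lott_singular_ricci_flows} the a priori bound on the volume of regions of moderate curvature: the region $\{R \le r(t)^{-2}\}$ in $\M_t$ has volume bounded in terms of $\vol_{g(0)}(\M_0)$ and $T$ (this follows from the monotonicity-type estimates already established, e.g.\ \cite[Proposition 5.5]{kleiner_lott_singular_ricci_flows}). On this region $|R|^p \le r(t)^{-2p}$, so that part of the integral is controlled. Next, on the high-curvature region $\{R > r(t)^{-2}\}$, I would use the canonical neighborhood assumption together with the gradient estimate (\ref{gradest}): near a point with $R(x,t) = \Lambda$, the curvature stays comparable to $\Lambda$ on a parabolic ball of radius $\sim \Lambda^{-\frac12}$, and the geometry there is $\epsilon$-close (after rescaling by $\Lambda$) to that of a $\kappa$-solution. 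I would then extract a Vitali-type disjoint subcollection of such balls $B(x_i, c\Lambda_i^{-\frac12})$ covering the level region, estimate the number of such balls needed by a packing argument, and bound the volume of $\{2^k \le R < 2^{k+1}\} \cap \M_t$ by (number of balls) $\times$ $2^{-\frac32 k}$. Here the main quantitative input controlling the \emph{number} of balls is that along any worldline the scalar curvature is monotone-ish and the parabolic regions have definite size, so the total ``volume budget'' inherited from the moderate-curvature region and the initial data limits how many high-curvature caps/necks can be present; alternatively one invokes the structure of the thin part via the canonical neighborhood decomposition directly, as in the proof of \cite[Corollary 7.7]{kleiner_lott_singular_ricci_flows}. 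Then I would sum over $k$.

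The step I expect to be the main obstacle is controlling the contribution of the \emph{compact components} of $\M_t$ that are entirely covered by canonical neighborhoods---i.e.\ small compact components modeled on compact $\kappa$-solutions, which can appear and disappear as $t$ varies. For these, the neck/cap packing estimate is not immediately available in the same form, and one needs the structural results of Section~\ref{sec3}: Corollary \ref{corcloseness} says that a compact $\kappa$-solution with a neck, pushed back in time, develops a neck, hence (via Lemma \ref{lemcloseness}) has the two-cap structure with volume $\lesssim R_{\max}^{-\frac32}$; the constant-curvature (spherical space form) components are handled separately and contribute volume $\sim R^{-\frac32}$ directly. So the argument must split the high-curvature region into (i) neck and cap points in noncompact-type regions, handled by packing, and (ii) small compact components, handled by the $\kappa$-solution geometry and Corollary \ref{corcloseness}, and then show that the total number (or total $R^{-\frac32}$-weighted count) of components of type (ii) is controlled by the volume of the moderate-curvature region plus $\vol_{g(0)}(\M_0)$, again using that each such component, traced backward in time until its curvature first drops to $r^{-2}$, occupies a definite amount of spacetime volume adjacent to the moderate region. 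Once that bookkeeping is in place, the dyadic sum $\sum_k 2^{pk} 2^{-\frac32 k}$ converges for $p < 1$ (indeed for $p < \frac32$), giving the stated bound with $\const(p,T) \sim (1 - 2^{p - \frac32})^{-1}$ times the moderate-region constant.
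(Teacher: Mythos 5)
Your overall decomposition (moderate curvature region, necks, caps, small compact components handled via Section~\ref{sec3}) matches the paper's, but the core of your argument for the high-curvature region has a genuine gap. You bound $\vol\{2^k\le R<2^{k+1}\}$ by $N_k\cdot 2^{-\frac32 k}$, where $N_k$ is the number of disjoint balls of radius $\sim 2^{-k/2}$, and then sum the dyadic series as if $N_k=O(1)$. But nothing in a single time slice controls $N_k$: the set $\{R\in[2^k,2^{k+1})\}$ can consist of arbitrarily many necks (the gradient bound (\ref{gradest}) caps how \emph{fast} $R$ can vary along a horn, not how slowly, so a horn can be arbitrarily long in scale-invariant terms within one dyadic curvature band, and there may be many such regions). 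Since each ball in your packing has volume comparable to $2^{-3k/2}$, the inequality ``volume $\le N_k\cdot 2^{-3k/2}$'' is essentially an identity, not an estimate; the appeal to a ``volume budget'' is circular unless you actually prove that the superlevel sets $\{R>\Lambda\}$ have small measure, which is equivalent to the proposition you are trying to prove. The claimed range $p<\tfrac32$ is the symptom: the theorem is stated only for $p<1$, and even $p=1$ is not obtained in the paper.

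The missing input is dynamical, not static. The paper controls $\vol_{g(t)}\{R\approx\Lambda\}$ by flowing each neck point backward along its worldline: by neck stability (\cite[Theorem 6.1]{kleiner_lott_singular_ricci_flows}), $R^{-1}$ grows along the backward worldline at essentially the cylindrical rate until $R$ drops to the canonical-neighborhood scale $\overline R$, so the Jacobian satisfies $J_t(x)=e^{-\int_0^t R(\gamma_x(u))\,du}\le C\,(R(x)/\overline R)^{-p}$ with $p$ arbitrarily close to (but strictly less than) $1$; integrating $R^pJ_t$ over the time-zero slice then gives (\ref{eqn_r_in_lp}) directly, with no dyadic sum and no count of necks. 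If you instead run your ``each neck sweeps out definite backward spacetime/time-zero volume'' idea quantitatively, you recover exactly this Jacobian estimate and find $N_k\lesssim V_0\,2^{(\frac32-p)k}$, which again yields only $p<1$. Your treatment of the compact components (tracing back until a neck appears, via Corollary~\ref{corcloseness} and Lemma~\ref{lemcloseness}, and handling round components separately) is in the right spirit and close to what the paper does, but it too ultimately reduces to the neck case and so cannot be completed without the worldline/Jacobian analysis.
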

\begin{proof}
Before entering into the details, we first give a sketch of the proof. 

Due to the bounds on $\V(t)$ from \cite[Proposition 5.5]{kleiner_lott_singular_ricci_flows}, it suffices to
control the contribution to the left-hand side of 
(\ref{eqn_r_in_lp}) from the points with large scalar curvature.
Such points fall into
three types, according to the geometry of the canonical neighborhoods:
(a) neck points, (b) cap points, and (c) points $p$ whose connected component 
in $\M_t$ is compact and has
diameter   comparable to $R(p)^{-\frac12}$.  If $(p,t)\in \M_t$
is a neck point with worldline $\ga:[0,t]\ra \M$ then thanks to the stability of 
necks going backward in time, the scale-invariant time derivative
$
R^{-1}\frac{\D R}{\D t}
$
will remain very close to the cylindrical value along $\ga$, until $R$
falls down to a value  comparable to $(r(t))^{-2}$.  Combining this
with previous estimates on the Jacobian as in 
\cite[Section 5]{kleiner_lott_singular_ricci_flows}, 
we can bound the contribution
from the neck points
to the left-hand side of (\ref{eqn_r_in_lp}) in terms of the volume
of the corresponding set of points in the 
time zero slice.   To control the 
contribution from points of type (b), we show that it  is 
dominated by that of the neck points.  To control the contribution from the
points of type (c) we use a similar approach. 
We again analyze the geometry going backward in time along
worldlines, except that in this case there are 
three stages : one where the components are nearly 
round, one when they are no longer nearly round but still have
diameter comparable to $R^{-\frac12}$, and one when they have a large
necklike region.

We now start on the proof.
With the notation of the
proof of \cite[Proposition 5.5]{kleiner_lott_singular_ricci_flows}, let $X_3 \subset \M_t$ be the
complement of the set of points in $\M_t$ with a bad worldline.
From \cite[Theorem 7.1]{kleiner_lott_singular_ricci_flows}, it has full measure in $\M_t$.
Given $x \in X_3$, let $\gamma_x : [0,t] \rightarrow \M_{[0, t]}$
be the restriction of its  worldline to the interval $[0,t]$.
Define $J_t(x)$ as in \cite[(5.8)]{kleiner_lott_singular_ricci_flows}, with
$t_1 = 0$.
 That is,
\begin{equation}
J_t(x) = \frac{i_t^* \dvol_{g(t)}}{\dvol_{g(0)}}(x)
\end{equation}
is the pointwise volume distortion of the inclusion map $i_t$ that goes
from (a subset of) the time-zero slice to the time-$t$ slice.
From \cite[(5.9)]{kleiner_lott_singular_ricci_flows}, we have
\begin{equation}
J_{t}(x) = e^{- \: \int_{0}^{t} R(\gamma_x(u)) \: du}.
\end{equation}

Given $T > 0$, 
we consider times $t$ in the range $[0,T]$.
Let $\hat{\epsilon}$, $C_1$, $\overline{R}$ and $\overline{R}'$
be as in \cite[Proposition 5.16]{kleiner_lott_singular_ricci_flows}.
We take $\overline{R} > r(T)^{-2}$.
From \cite[Proposition 5.15]{kleiner_lott_singular_ricci_flows} we can assume that the 
$\hat{\epsilon}$-canonical neighborhood assumption holds on the superlevel set
$\M_{[0,T]}^{> \overline{R}}$ of the scalar curvature function. 
We will further adjust the parameters 
$\hat{\epsilon}$ and $\overline{R}'$ later.

For any $\widehat{R} \ge \overline{R}$,
write
\begin{equation}
\M_t^{> \widehat{R}} = 
\M_{t,neck}^{> \widehat{R}} \cup \M_{t,cap}^{> \widehat{R}} \cup 
\M_{t,closed}^{> \widehat{R}},
\end{equation}
where 
\begin{itemize}
\item 
$\M_{t,neck}^{> \widehat{R}}$ consists of the points in
$\M_t^{> \widehat{R}}$ that
are centers of $\widehat{\epsilon}$-necks,
\item 
$\M_{t,cap}^{> \widehat{R}}$ consists of the points $x
\in \M_t^{> \widehat{R}} - 
\M_{t,neck}^{> \widehat{R}}$ so
that after rescaling by $R(x)$, the pair $(\M_t, x)$ is $\hat{\epsilon}$-close
to a pointed noncompact $\kappa$-solution, and
\item 
$\M_{t,closed}^{> \widehat{R}}
= \M_t^{> \widehat{R}} - 
\left( \M_{t,neck}^{> \widehat{R}} \cup 
\M_{t,cap}^{> \widehat{R}} \right)$.
\end{itemize}

Taking $\widehat{R} = \overline{R}'$, there is a compact set ${\mathcal C}$
of $\kappa$-solutions so that for
$x \in \M_{t,closed}^{> \overline{R}'}$,
after rescaling by $R(x)$ the connected component of $\M_t$ containing $x$ 
is $\hat{\epsilon}$-close to an element of ${\mathcal C}$
(c.f. Step 1 of the proof of \cite[Theorem 7.1]{kleiner_lott_singular_ricci_flows}).
In particular, before rescaling, the diameter of the component is
bounded above by 
$C  R(x)^{- \: \frac12}$ and the scalar curvature on the component satisfies
\begin{equation} \label{oncomp}
C^{-1} R_{av} \le R \le C R_{av},
\end{equation}
for an appropriate constant
$C = C(\hat{\epsilon}) < \infty$,
where $R_{av}$ denote the average scalar curvature on the component. 

By the pointed compactness of the space of normalized $\kappa$-solutions,
and the diameter bound on the caplike regions in normalized pointed
noncompact $\kappa$-solutions,
there is a 
$C' = C'(\hat{\epsilon},\overline{R}') < \infty$ so that 
\begin{equation} \label{capineq}
\int_{\M_{t,cap}^{> \overline{R}'}} |R|^p \: \dvol_{g(t)} \le C'
\int_{\M_{t,neck}^{> \overline{R}'}} |R|^p \: \dvol_{g(t)}.
\end{equation}
Hence we can restrict our attention to 
${\M_{t,neck}^{> \overline{R}'}}$ and
${\M_{t,closed}^{> \overline{R}'}}$.

Consider $x \in \M_{t,neck}^{> \overline{R}'} \cap X_3$. 
With $\delta_{neck}$, $\delta_0$ and $\delta_1$ being
parameters of \cite[Theorem 6.1]{kleiner_lott_singular_ricci_flows}, 
we assume that $\hat{\epsilon} < \frac{\delta_0}{100}$ and
$\delta_1 < \frac{\delta_0}{100}$. Using
\cite[Theorem 6.1]{kleiner_lott_singular_ricci_flows} and the
$\hat{\epsilon}$-canonical neighborhood assumption, there are
$T^{\prime \prime} < T^\prime < 0$ so that 
for $s \in [T^{\prime \prime}, T^\prime]$, the rescaled solution
$\left( \hat{\mathcal M}( - s R(x)^{-1}), 
\gamma_x(t + s R(x)^{-1}) \right)$ is 
$\frac{\delta_0}{10}$-close to $(\cyl, (y_0, -1))$.
By reducing $\hat{\epsilon}$, we can make
$T^{\prime \prime}$ arbitrarily negative.

The gradient bound (\ref{gradest}) gives
\begin{equation} \label{ueqn}
\frac{1}{R(\gamma_x(u))} \le \frac{1}{R(x)} + \eta (t-u).
\end{equation}
as long as $\gamma_x(u)$ stays in a canonical neighborhood.
If 
\begin{equation}
\overline{R}' \ge (1 - \eta T^{\prime \prime}) \overline{R} 
\end{equation}
then for all $u \in [t + T^{\prime \prime} R(x)^{-1}, t]$,
we have $R(\gamma_x(u)) \ge \overline{R}$ and (\ref{ueqn}) holds,
so
\begin{align}
\int_{t + T^\prime R(x)^{-1}}^{t} R(\gamma_x(u)) \: du \: & \ge \:
\int_{t + T^\prime R(x)^{-1}}^{t} 
\frac{1}{\frac{1}{R(x)} + \eta (t-u)} \: du \\
& = \:
\frac{1}{\eta} \log(1 - \eta T^\prime). \notag
\end{align}

For a round shrinking cylinder, the sharp value for $\eta$ in 
(\ref{ueqn}) is $1$. For any $q > 1$, if $\delta_0$ is
sufficiently small then we are ensured that
\begin{align}
\int_{t + T^{\prime \prime} R(x)^{-1}}^{t + T^\prime R(x)^{-1}} 
R(\gamma_x(u)) \: du \: & \ge \:
\int_{t + T^{\prime \prime} R(x)^{-1}}^{t + T^\prime R(x)^{-1}}
\frac{1}{\frac{1}{R(x)} + q (t-u)} \: du \\
& = \:
\frac{1}{q} \log \frac{1 - q T^{\prime \prime}}{1 - q T^\prime}. \notag
\end{align}
In all,
\begin{equation}
e^{- \int_{t + T^{\prime \prime} R(x)^{-1}}^{t} 
R(\gamma_x(u)) \: du} \le (1 - \eta T^\prime)^{- \frac{1}{\eta}}
(1 - q T^\prime)^{\frac{1}{q}}
(1 - q T^{\prime \prime})^{- \frac{1}{q}}.
\end{equation}
Because of the cylindrical approximation,
\begin{equation} \label{2ineq}
\frac12 \le \frac{( 1 - T^{\prime \prime} ) 
R ( \gamma_x(t+T^{\prime \prime} R(x)^{-1} ))}{
 R(x)} \le 2,
\end{equation}
and so there is a constant $C^{\prime \prime} = 
C^{\prime \prime}(q,\eta,T^\prime) < \infty$ such that
for 
very negative
 $T^{\prime \prime}$, we have
\begin{equation}
e^{- \int_{t + T^{\prime \prime} R(x)^{-1}}^{t} 
R(\gamma_x(u)) \: du} \le C^{\prime \prime} 
\left( \frac{R(x)}{R ( \gamma(t+T^{\prime \prime} R(x)^{-1} ))}
\right)^{- \frac{1}{q}}.
\end{equation}

We now replace $t$ by $t+T^{\prime \prime} R(x)^{-1}$ and iterate
the argument. Eventually, there will be a first time $t_x$ when
we can no longer continue the iteration because the curvature 
has gone below $\overline{R}$.
Suppose that there are $N$ such iterations.  Then
\begin{equation}
e^{- \int_{t_x}^{t} 
R(\gamma_x(u)) \: du} \le (C^{\prime \prime})^N  
\left( \frac{R(x)}{\overline{R}} \right)^{- \frac{1}{q}}.
\end{equation}
From (\ref{2ineq}),
\begin{equation}
\left( \frac{1 - T^{\prime \prime}}{2} 
\right)^{N-1} \le \frac{R(x)}{\overline{R}},
\end{equation}
so
\begin{equation}
\label{eqn_t_x_t}
\begin{aligned}
e^{- \int_{t_x}^{t} 
R(\gamma_x(u)) \: du} & \le 
C^{\prime \prime}   
\left( C^{\prime \prime} \right)^{N-1} 
\left( \frac{R(x)}{\overline{R}} \right)^{- \: \frac{1}{q}} \\
& \le C^{\prime \prime}   
\left( \frac{R(x)}{\overline{R}} \right)^{ 
\frac
{\log C^{\prime \prime}
}{
\log \left( \frac{1-T^{\prime \prime}}{2} \right)
}
\: - \: \frac{1}{q}}. 
\end{aligned}
\end{equation}

Put
$p = \frac{1}{q} - \frac
{\log C^{\prime \prime}
}{
\log \left( \frac{1-T^{\prime \prime}}{2} \right)
}$.
By choosing $q$ sufficiently close to $1$ (from above) and $T^{\prime \prime}$
sufficiently negative, we can make $p$ arbitrarily close to $1$
(from below).
Using the lower scalar curvature bound
\cite[Lemma 5.2]{kleiner_lott_singular_ricci_flows},
we have
\begin{equation}
\int_0^{t_x} R(\gamma_x(u)) \: du  \ge -
\int_0^{T} \frac{3}{1+2u} \: du 
= - \frac32 \log(1+2T).
\end{equation}
Then
\begin{equation}
\label{eqn_0_t_x}
e^{- \int_0^{t_x} R(\gamma_x(u)) \: du}  \le
(1+2T)^{\frac32}.
\end{equation}
As
\begin{equation}
  e^{- \int_0^{t} R(\gamma_x(u)) \: du} =
  e^{- \int_0^{t_x} R(\gamma_x(u)) \: du}
  e^{- \int_{t_x}^{t} R(\gamma_x(u)) \: du},
  \end{equation}
by combining (\ref{eqn_t_x_t}) and (\ref{eqn_0_t_x}) we obtain
\begin{equation}
\frac{\dvol_{g(t)}}{\dvol_{g(0)}} (x) = 
J_t(x) \le C^{\prime \prime} (1+2T)^{\frac32} 
\left( \frac{R(x)}{\overline{R}} \right)^{- p}.
\end{equation}
Then 
\begin{align} \label{newcomb1}
\int_{\M_{t,neck}^{> \overline{R}'}} R^{p} \: \dvol_{g(t)} & \le
C^{\prime \prime} \overline{R}^p (1+ 2T)^{\frac32}
\int_{\M_{t,neck}^{> \overline{R}'}} \dvol_{g(0)} \\
& \le C^{\prime \prime} \overline{R}^p  (1+ 2T)^{\frac32}
\vol_{g(0)}(\M_0). \notag
\end{align}
This finishes the discussion of the neck points.

Let $\overline{R}'' > \overline{R}'$ be a new parameter.
Given $\sigma > 0$ small, let 
${\mathcal M}_{t,round}$ be the connected components of
${\mathcal M}_t$ that intersect
${\mathcal M}_{t,closed}^{> \overline{R}''}$ and
are $\sigma$-close to a
constant curvature metric, and let 
${\mathcal M}_{t,nonround}$ be the other connected 
components of $\M_t$ that intersect 
${\mathcal M}_{t,closed}^{> \overline{R}''}$.
Using \cite[Proposition 5.17]{kleiner_lott_singular_ricci_flows}, 
a connected component ${\mathcal N}_t$ in
${\mathcal M}_{t}$ determines a connected component
${\mathcal N}_{t^\prime}$ in
${\mathcal M}_{t^\prime}$ for all $t^\prime \le t$.

Let ${\mathcal N}_t$ be a component in 
${\mathcal M}_{t,nonround}$.
From (\ref{oncomp}), we have
$R_{av} \ge C^{-1} \overline{R}''$.
Using the compactness of the space of approximating $\kappa$-solutions,
we can apply Lemma \ref{lemcloseness} and Corollary \ref{corcloseness}.
Then for ${\epsilon}^\prime$ small and
${\mathcal T} = {\mathcal T}({\epsilon}^\prime) < \infty$,
there is some $t^\prime \in [t, t - 10 C {\mathcal T} 
R_{av}^{-1}]$ so that ${\mathcal N}_{t^\prime}$ consists of centers of 
${\epsilon}^\prime$-necks and two caps. From
(\ref{ueqn}), if $x \in {\mathcal N}_t$  and
\begin{equation} \label{needed}
R \Big|_{\gamma_x([t^\prime, t])} \ge \overline{R}
\end{equation}
then
\begin{equation}
\frac{1}{R(\gamma_x(t^\prime))} \le \frac{C}{R_{av}} + 10 C \eta {\mathcal T}
R_{av}^{-1},
\end{equation}
so
\begin{equation} \label{easy}
R(\gamma_x(t^\prime)) \ge \frac{R_{av}}{C(1+10 \eta {\mathcal T})} \ge
\frac{\overline{R}''}{C^2 (1+10 \eta {\mathcal T})}.
\end{equation}
If $\overline{R}'' > C^2 (1 + 10 \eta {\mathcal T}) \overline{R}'$
then (\ref{needed}) holds and from (\ref{easy}),
${\mathcal N}_{t^\prime} \subset {\mathcal M}_{t^\prime}^{>
\overline{R}'}$. From (\ref{oncomp}) and (\ref{easy}), we also have
\begin{equation}
R(x) \le C R_{av} \le C^2 (1 + 10 \eta {\mathcal T}) R(\gamma_x(t^\prime)).
\end{equation}
Since
the volume element at $\gamma_u(x)$ is nonincreasing as a function of $u \in
[t^\prime, t]$, we obtain
\begin{equation}
\int_{{\mathcal N}_t} R \: \dvol_{g(t)} \le C^2 (1 + 10 \eta {\mathcal T})
\int_{{\mathcal N}_{t^\prime}} R \: \dvol_{g(t^\prime)}.
\end{equation}
We now apply the argument starting with (\ref{capineq}) to
${\mathcal N}_{t^\prime}$. Taking
$\frac{\overline{R}'}{\overline{R}}$ large
compared to $\frac{\overline{R}''}{\overline{R}'}$, in order to ensure
many iterations in the earlier-neck argument,
we get a bound 
\begin{equation}
\int_{{\mathcal N}_t} |R|^p \: \dvol_{g(t)} \le \const (p,T) 
\vol_{g(0)}({\mathcal N}_t).
\end{equation}
This takes care of the components in ${\mathcal M}_{t,nonround}$.

Let ${\mathcal N}_t$ be a component of ${\mathcal M}_t$ in
${\mathcal M}_{t,round}$.
Let $\tau$ be the infimum of the $u$'s 
so that for all $t^\prime \in [u, t]$,
the metric on ${\mathcal N}_{t^\prime}$ is $\sigma$-close to a
constant curvature metric. 
For a Ricci flow solution with time slices of
constant positive curvature, $R$ is strictly increasing
along forward worldlines but
$\int R \: \dvol$ is strictly decreasing in $t$.  Hence if $\sigma$
is sufficiently small then we are ensured that
\begin{equation}
\int_{{\mathcal N}_t} R \: \dvol_{g(t)} \le 
\int_{{\mathcal N}_{\tau}} R \: \dvol_{g(\tau)}.
\end{equation}

If $N_{\tau}$ has a point with scalar curvature
at most $\overline{R}''$ and $\sigma$ is small then
\begin{equation}
\int_{{\mathcal N}_{\tau}} R \: \dvol \le
2 \overline{R}'' \vol_{g(\tau)} 
\left( {\mathcal N}_{\tau} \right) \le
2 \overline{R}'' (1+2T)^{\frac32}  \vol_{g(0)}
\left( {\mathcal N}_t \right).
\end{equation}
If, on the other hand, $N_{\tau} \subset \M_{\tau}^{> \overline{R}''}$ then
we can apply the preceding argument for $\M_{t,nonround}$, replacing
$t$ by $\tau$. The conclusion is that
\begin{equation} \label{newcomb2}
\int_{{\mathcal M}^{> \overline{R}''}_{t,closed}} 
|R|^p \: \dvol_{g(t)} \le \const (p,T) 
\vol_{g(0)}({\mathcal M}^{> \overline{R}''}_{t,closed}).
\end{equation}

Since 
\begin{equation} \label{newcomb3}
\int_{\M_{t,neck}^{> \overline{R}''}} R^{p} \: \dvol_{g(t)} \le
\int_{\M_{t,neck}^{> \overline{R}'}} R^{p} \: \dvol_{g(t)},
\end{equation}
\begin{equation} \label{newcomb3.5}
\int_{\M_{t,cap}^{> \overline{R}''}} R^{p} \: \dvol_{g(t)} \le
\int_{\M_{t,cap}^{> \overline{R}'}} R^{p} \: \dvol_{g(t)}
\end{equation}
and
\begin{equation} \label{newcomb4}
\int_{\M_{t}^{\le \overline{R}''}} |R|^p \: \dvol_{g(t)} \le
\left( \overline{R}'' \right)^p \: (1+2T)^{\frac32}
\vol_{g(0)} \left( \M_{t}^{\le \overline{R}''} \right), 
\end{equation}
the proposition follows from
(\ref{capineq}), 
(\ref{newcomb1}), (\ref{newcomb2}), (\ref{newcomb3}), 
(\ref{newcomb3.5}) and 
(\ref{newcomb4}).
\end{proof}

\begin{proposition} \label{holder}
Let $\M$ be a singular Ricci flow.
Let $\eta$ be the constant from (\ref{gradest}). We can
assume that $\eta \ge 1$. 
Then
whenever $0 \le t_1 \le t_2 < \infty$ satisfies
$t_2 - t_1 < \frac{1}{\eta} r(t_2)^2$ and $t_1 > \frac{1}{100 \eta}$, we have
\begin{align} \label{volest}
& \V(t_2) - \V(t_1) \ge \\
& - \eta^{\frac{1}{\eta}}
\left( 2  \int_{\M_{t_1}}
|R|^{\frac{1}{\eta}} \: \dvol_{g(t_1)}
+ r(t_2)^{- \: \frac{2}{\eta}} \V(t_1) 
\right) (t_2 - t_1)^{\frac{1}{\eta}}  \ge \notag \\
& - 5 \eta^{\frac{1}{\eta}}
r(t_2)^{- \: \frac{2}{\eta}} (1+2t_1)^{\frac32} \V(0) 
\cdot (t_2 - t_1)^{\frac{1}{\eta}}. \notag
\end{align}
\end{proposition}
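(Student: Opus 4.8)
The plan is to track the volume forward along worldlines from time $t_1$ to time $t_2$, using the hypothesis $t_2-t_1<\frac{1}{\eta}r(t_2)^2$ to see that almost no volume can be lost to singularities in this time, and the gradient estimate (\ref{gradest}) to control the volume distortion of the surviving worldlines. Set $\overline{R}:=r(t_2)^{-2}$ and $\Lambda:=\frac{1}{\eta(t_2-t_1)}$, so that $\overline{R}<\Lambda$ by the hypothesis. For $x\in\M_{t_1}$ with worldline $\gamma_x$, put $\rho_x(u):=\big(\max(R(\gamma_x(u)),\overline{R})\big)^{-1}$ wherever $\gamma_x$ is defined. Since $r$ is decreasing, $R(\gamma_x(u))>\overline{R}\ge r(u)^{-2}$ forces $(\gamma_x(u),u)$ to be a canonical neighborhood point, so (\ref{gradest}) gives $\big|\frac{d}{du}\frac1R\big|<\eta$ there, while on the complementary set $\rho_x\equiv 1/\overline{R}$; hence $\rho_x$ is locally Lipschitz with $\rho_x'\ge-\eta$ almost everywhere, and therefore $\rho_x(u)\ge\rho_x(t_1)-\eta(u-t_1)$ wherever defined. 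I would then observe that every worldline starting in $B':=\{x\in\M_{t_1}:R(x,t_1)<\Lambda\}$ survives up to $t_2$: if $\gamma_x$ ran into a singularity at some $T_*\le t_2$ then $R(\gamma_x(u))\to\infty$ as $u\to T_*^-$ (properness and lower-boundedness of $R$, Definition \ref{srf}(a)), so for some $u_*\in[t_1,T_*)$ one has $R(\gamma_x(u))>\overline{R}$ on $(u_*,T_*)$ with $\max(R(\gamma_x(u_*)),\overline{R})\le\Lambda$; but then $\rho_x(u)\ge\rho_x(u_*)-\eta(T_*-u_*)\ge\Lambda^{-1}-\eta(t_2-t_1)>0$ there, contradicting $R\to\infty$.

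Consequently the worldline map $\Phi\colon B'\to\M_{t_2}$, $\Phi(x)=\gamma_x(t_2)$, is well-defined, smooth and injective, and by the Ricci-flow volume-element evolution $\partial_t\dvol=-R\,\dvol$ (as in \cite[(5.8)--(5.9)]{kleiner_lott_singular_ricci_flows}) and the change-of-variables formula, $\vol_{g(t_2)}(\Phi(B'))=\int_{B'}e^{-I(x)}\,\dvol_{g(t_1)}(x)$, where $I(x):=\int_{t_1}^{t_2}R(\gamma_x(u))\,du$. Hence $\V(t_2)\ge\int_{B'}e^{-I(x)}\,\dvol_{g(t_1)}$, which rearranges (using $\int_{B'}e^{-I}=\vol_{g(t_1)}(B')-\int_{B'}(1-e^{-I})$ and $\vol_{g(t_1)}(B')=\V(t_1)-\vol_{g(t_1)}(\M_{t_1}\setminus B')$) to
\[
\V(t_2)-\V(t_1)\ \ge\ -\,\vol_{g(t_1)}(\M_{t_1}\setminus B')\ -\ \int_{B'}\big(1-e^{-I(x)}\big)\,\dvol_{g(t_1)}(x).
\]

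It remains to estimate the two terms. For the pointwise estimate on $B'$, fix $x\in B'$ and set $s=s(x):=\max(R(x,t_1),\overline{R})/\Lambda\in[0,1)$, so $\rho_x(t_1)=(\Lambda s)^{-1}$. Since $R_+(\gamma_x(u))\le\rho_x(u)^{-1}\le\big(\rho_x(t_1)-\eta(u-t_1)\big)^{-1}$ on $[t_1,t_2]$ (the denominator being $\ge(\Lambda s)^{-1}-\eta(t_2-t_1)=\frac{1-s}{\Lambda s}>0$), integrating gives $I(x)\le\int_{t_1}^{t_2}\big((\Lambda s)^{-1}-\eta(u-t_1)\big)^{-1}du=\frac1\eta\log\frac1{1-s}$, whence $1-e^{-I(x)}\le1-(1-s)^{1/\eta}\le s^{1/\eta}$; the last inequality is the elementary $(1-s)^{1/\eta}+s^{1/\eta}\ge\big((1-s)+s\big)^{1/\eta}=1$, valid because $\frac1\eta\le1$. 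Using $\max(a,b)\le a+b$ and subadditivity of $t\mapsto t^{1/\eta}$, $1-e^{-I(x)}\le(R_+(x,t_1)/\Lambda)^{1/\eta}+(\overline{R}/\Lambda)^{1/\eta}$. Integrating over $B'$, and bounding $\vol_{g(t_1)}(\M_{t_1}\setminus B')=\vol_{g(t_1)}\{R(\cdot,t_1)\ge\Lambda\}\le\Lambda^{-1/\eta}\int_{\M_{t_1}}|R|^{1/\eta}\,\dvol_{g(t_1)}$ by Chebyshev, the right-hand side above is at least $-2\Lambda^{-1/\eta}\int_{\M_{t_1}}|R|^{1/\eta}\,\dvol_{g(t_1)}-(\overline{R}/\Lambda)^{1/\eta}\V(t_1)$. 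Substituting $\Lambda^{-1/\eta}=\eta^{1/\eta}(t_2-t_1)^{1/\eta}$ and $(\overline{R}/\Lambda)^{1/\eta}=\eta^{1/\eta}r(t_2)^{-2/\eta}(t_2-t_1)^{1/\eta}$ yields exactly the middle line of (\ref{volest}); the final line then follows by applying Proposition \ref{prop_r_in_lp} with $p=\frac1\eta$ and $T=t_1$ to bound $\int_{\M_{t_1}}|R|^{1/\eta}\,\dvol_{g(t_1)}$, together with $\V(t_1)\le(1+2t_1)^{3/2}\V(0)$ and $r(t_2)\le r(t_1)$ from \cite[Proposition 5.5]{kleiner_lott_singular_ricci_flows}, after tracking constants (this is where the hypothesis $t_1>\frac1{100\eta}$ is used).

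The \emph{main obstacle} is the pointwise estimate of the previous paragraph: one needs a bound on $\int_{t_1}^{t_2}R(\gamma_x(u))\,du$ along a worldline that is strong enough to integrate against $\dvol_{g(t_1)}$ over a time slice on which only $|R|^{1/\eta}$, and not $R$ itself, is known to be integrable. A naive bound via the forward evolution of $1/R$ degenerates because $1/R$ can vanish, and subadditivity estimates lose a logarithmic factor. The device that makes it work is to estimate instead the \emph{truncation} $1/\max(R,\overline{R})$, which is genuinely $\eta$-Lipschitz from below along the entire worldline, together with the splitting of $\M_{t_1}$ at the scale $\Lambda=1/(\eta(t_2-t_1))$: this is precisely the scale at which the resulting bound $\frac1\eta\log\frac1{1-s}$ stays finite on the low-curvature part $B'$, while the high-curvature part $\{R(\cdot,t_1)\ge\Lambda\}$ — where the estimate breaks down — has small volume by Chebyshev applied to $\int|R|^{1/\eta}$. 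Everything else is bookkeeping, and the coefficient $2$ in (\ref{volest}) is exactly one contribution of $\Lambda^{-1/\eta}\int|R|^{1/\eta}$ from each of the two parts.
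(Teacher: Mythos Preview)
Your argument is correct and follows the same overall strategy as the paper: split $\M_{t_1}$ at the curvature threshold $\Lambda=\frac{1}{\eta(t_2-t_1)}$, use Chebyshev for the high-curvature part, use (\ref{gradest}) to control the Jacobian $e^{-I}$ on the low-curvature part, and finish with the elementary inequality $(1-z)^{1/\eta}-1\ge -z^{1/\eta}$. The paper carries this out by further dividing the surviving set $X_1$ into three pieces $X_1',X_1'',X_1'''$ according to whether $R(x)>\Lambda$, $r(t_2)^{-2}<R(x)\le\Lambda$, or $R(x)\le r(t_2)^{-2}$, and proves three separate lemmas (each a barrier argument by contradiction) to control $R$ along worldlines in $X_2$, $X_1''$, and $X_1'''$ respectively. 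Your device of tracking the truncated quantity $\rho_x=1/\max(R,\overline{R})$ and observing that $\rho_x'\ge-\eta$ a.e.\ is a cleaner packaging: it collapses the paper's three lemmas into a single Lipschitz estimate, and the case analysis disappears because the truncation handles automatically the possibility that $R$ dips below $\overline{R}$ along the worldline. Two small points: in your survival argument the detour through $u_*$ is unnecessary, and the line ``$\ge\Lambda^{-1}-\eta(t_2-t_1)>0$'' is actually $=0$; the correct observation is that the strict inequality $R(x,t_1)<\Lambda$ gives $\rho_x(t_1)>\Lambda^{-1}$, so $\rho_x(u)\ge\rho_x(t_1)-\eta(t_2-t_1)>0$ uniformly on $[t_1,T_*)$. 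Also, your symbol $R_+$ is used in two different senses; in both places you only need $R\le\rho_x^{-1}$ and $\max(R,\overline{R})\le |R|+\overline{R}$. The final line of (\ref{volest}) is not verified carefully in either your write-up or the paper's; both simply invoke Proposition~\ref{prop_r_in_lp} and the volume bound from \cite[Proposition~5.5]{kleiner_lott_singular_ricci_flows}.
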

\begin{proof}
Let $X_1 \subset \M_{t_1}$ be the set of points $x\in \M_{t_1}$
whose worldline $\gamma_x$
extends forward to time $t_2$ and
let $X_2 \subset \M_{t_1}$ be the points $x$
whose worldline $\gamma_x$
does not extend forward to time $t_2$. 
Put
\begin{equation}
X_1^{\prime} = 
\left\{ x \in X_1 \: : \: R(x) > \frac{1}{\eta(t_2 - t_1)} \right\},
\end{equation}

\begin{equation}
X_1^{\prime \prime} = 
\left\{ x \in X_1 \: : \: r(t_2)^{-2} < R(x) \le
\frac{1}{\eta(t_2 - t_1)} \right\}
\end{equation}
and
\begin{equation}
X_1^{\prime \prime \prime} = \left\{ 
x \in X_1 \: : \: R(x) \le r(t_2)^{-2} \right\}.
\end{equation}
Then
\begin{align} \label{addup1}
\vol(\M_{t_2}) - \vol(\M_{t_1}) \ge &
\vol_{t_2} \left( X_1^{\prime} \right) -
\vol_{t_1} \left( X_1^{\prime} \right) + \\
& \vol_{t_2} \left( X_1^{\prime \prime} \right) -
\vol_{t_1} \left( X_1^{\prime \prime} \right) + \notag \\ 
& \vol_{t_2} \left( X_1^{\prime \prime \prime} \right) -
\vol_{t_1} \left( X_1^{\prime \prime \prime} \right) -
\vol_{t_1} \left( X_2 \right) \notag \\
\ge &
\vol_{t_2} \left( X_1^{\prime \prime} \right) -
\vol_{t_1} \left( X_1^{\prime \prime} \right) + 
\vol_{t_2} \left( X_1^{\prime \prime \prime} \right) - \notag \\
& \vol_{t_1} \left( X_1^{\prime \prime \prime} \right) -
\vol_{t_1} \left( X_2 \right) - \vol_{t_1} \left( X_1^{\prime} \right). \notag
\end{align}

\bigskip

Suppose that $x \in X_2$.

\begin{lemma}
Let $[t_1, t_x)$ be the domain of the forward extension of
$\gamma_x$, with $t_x < t_2$. 
For all $u \in [t_1, t_x)$, we have
\begin{equation} \label{crawl}
R(\gamma_x(u)) \ge \frac{1}{\eta(t_x - u)}.
\end{equation}
\end{lemma}
\begin{proof}
If the lemma is not true, put 
\begin{equation}
u^\prime = \sup \left\{u \in [t_1, t_x) \: : \: 
R(\gamma_x(u)) < \frac{1}{\eta(t_x - u)} \right\}.
\end{equation}
Then $u^\prime > t_1$. From the
gradient estimate (\ref{gradest}) and the fact that
$\lim_{u \ra t_x} R(\gamma_x(u)) = \infty$, we know that
$u^\prime < t_x$. Whenever $u \ge u^\prime$, we have
\begin{equation}
R(\gamma_x(u)) \ge \frac{1}{\eta(u_x - u^\prime)} \ge
\frac{1}{\eta(t_2  - t_1)} > r(t_2)^{-2},
\end{equation} 
so there is some $\mu > 0$ so that
the gradient estimate (\ref{gradest}) holds on the
interval $(u^\prime - \mu, t_x)$.
This implies that
(\ref{crawl}) holds for all $u \in (u^\prime - \mu, t_x)$,
which contradicts the definition of $u^\prime$.
This proves the lemma.
\end{proof}

Hence
\begin{equation}
(X_2 \cup X_1^\prime) \subset 
\left\{x \in \M_{t_1} \: : \: 
R(x) \ge \frac{1}{\eta(t_2 - t_1)} \right\}
\end{equation}
and
\begin{align} \label{addup2}
\vol_{t_1}(X_2) + \vol_{t_1} (X_1^\prime) 
& \le \vol \left\{x \in \M_{t_1} \: : \: 
R(x) \ge \frac{1}{\eta(t_2 - t_1)} \right\} \\
& \le
\eta^{\frac{1}{\eta}} (t_2 - t_1)^{\frac{1}{\eta}}
\int_{\M_{t_1}} |R|^{\frac{1}{\eta}} \: \dvol_{g(t_1)}, \notag
\end{align}
since $\eta^{\frac{1}{\eta}}(t_2-t_1)^\frac{1}{\eta}|R|^\frac{1}{\eta}\geq 1$
on the set $\{x\in \M_{t_1}: R(x)\geq \frac{1}{\eta(t_2-t_1)}\}$.

Suppose now that $x \in X_1^{\prime \prime}$.
\begin{lemma}
For all $u \in [t_1, t_2]$, we have
\begin{equation} \label{estpp}
R(\gamma_x(u)) \le \frac{1}{\frac{1}{R(x)} - \eta(u-t_1)} < \infty.
\end{equation}
\end{lemma}
\begin{proof}
If the lemma is not true, put
\begin{equation}
u^{\prime \prime} = \inf \left\{u \in [t_1, t_2] \: : \:
R(\gamma_x(u)) > \frac{1}{\frac{1}{R(x)} - \eta(u-t_1)} \right\}.
\end{equation}
Then $u^{\prime \prime} < t_2$ and
the gradient estimate (\ref{gradest}) implies that
$u^{\prime \prime} > t_1$. 
Now
\begin{equation} \label{Reqal}
R(\gamma_x(u^{\prime \prime})) = \frac{1}{\frac{1}{R(x)} - 
\eta(u^{\prime \prime}-t_1)} 
> R(x) > r(t_2)^{-2}.
\end{equation}
Hence there is some $\mu > 0$ so that
$R(\gamma_x(u)) \ge r(t_2)^{-2}$ for $u \in [u^{\prime \prime},
u^{\prime \prime} + \mu]$.
If $R(\gamma_x(u)) \ge r(t_2)^{-2}$ for all 
$u \in [t_1, u^{\prime \prime}]$
then (\ref{gradest}) implies that (\ref{estpp}) holds for
$u \in [t_1, u^{\prime \prime} + \mu]$, which contradicts the definition of
$u^{\prime \prime}$. 
On the other hand, if it is not true that 
$R(\gamma_x(u)) \ge r(t_2)^{-2}$ for all 
$u \in [t_1, u^{\prime \prime}]$, put
\begin{equation}
v^{\prime \prime} = 
\sup \left\{u \in [t_1, u^{\prime \prime}] \: : \: R(\gamma_x(u)) <
r(t_2)^{-2} \right\}.
\end{equation}
Then $v^{\prime \prime} > t_1$ and 
$R(\gamma_x(v^{\prime \prime})) =
r(t_2)^{-2}$. Equation
(\ref{gradest}) implies that
\begin{equation}
R(\gamma_x(u^{\prime \prime})) \le 
\frac{1}{r(t_2)^2 - \eta (u^{\prime \prime} - v^{\prime \prime})} <
\frac{1}{\frac{1}{R(x)} - \eta(u^{\prime \prime} -t_1)},
\end{equation}
which contradicts (\ref{Reqal}). This proves the lemma.
\end{proof}

Hence if $x \in X_1^{\prime \prime}$ then
\begin{align}
\int_{t_1}^{t_2} R(\gamma_x(u)) \: du & \le 
\int_{t_1}^{t_2} \frac{1}{\frac{1}{R(x)} - \eta (u-t_1)} \: du \\
& =
- \: \frac{1}{\eta} \log \left( 1 - \eta R(x) \cdot (t_2-t_1) \right), \notag
\end{align}
so
\begin{equation}
\frac{\dvol_{g(t_2)}}{\dvol_{g(t_1)}} (x) = 
J_{t_2}(x) \ge \left( 1 - \eta R(x) \cdot (t_2-t_1) \right)^{\frac{1}{\eta}}.
\end{equation}
Thus
\begin{align}
& \vol_{t_2} \left( X_1^{\prime \prime} \right) -
\vol_{t_1} \left( X_1^{\prime \prime} \right) \ge \\
& \int_{X_1^{\prime \prime}} \left(
\left( 1 - \eta  R \cdot (t_2-t_1) \right)^{\frac{1}{\eta}} - 1
\right) \: \dvol_{g(t_1)}. \notag
\end{align}

Since $\eta \ge 1$, if $z \in [0,1]$ then
$\left( z^{\frac{1}{\eta}} \right)^{\eta} + 
\left( 1-z^{\frac{1}{\eta}} \right)^\eta \le 1$, so
\begin{equation} \label{trivial}
\left(1-z \right)^{\frac{1}{\eta}} - 1 \ge - z^{\frac{1}{\eta}}.
\end{equation}
Then
\begin{equation} \label{addup3}
\vol_{t_2} \left( X_1^{\prime \prime} \right) -
\vol_{t_1} \left( X_1^{\prime \prime} \right) \ge
- \eta^{\frac{1}{\eta}} (t_2-t_1)^{\frac{1}{\eta}}
\int_{X_1^{\prime \prime}} R^{\frac{1}{\eta}} \: \dvol_{g(t_1)}.
\end{equation}

Now suppose that $x \in X_1^{\prime \prime \prime}$.
\begin{lemma}
For all $u \in [t_1, t_2]$, we have
\begin{equation} \label{estppp}
R(\gamma_x(u)) \le \frac{1}{r(t_2)^2 - \eta(u-t_1)} < \infty.
\end{equation}
\end{lemma}
\begin{proof}
If the lemma is not true, put
\begin{equation}
u^{\prime \prime \prime} = \inf \left\{u \in [t_1, t_2] \: : \:
R(\gamma_x(u)) > \frac{1}{r(t_2)^2 - \eta(u-t_1)} \right\}.
\end{equation}
Then $u^{\prime \prime \prime} < t_2$.
If $R(x) < r(t_2)^{-2}$ then clearly $u^{\prime \prime \prime} > t_1$.
If $R(x) = r(t_2)^{-2}$ then
since $r(t_1) > r(t_2)$,
there is some $\nu > 0$ so that $R(\gamma_x(u)) > r(u)^{-2}$ for
$u \in [t_1, t_1 + \nu]$; then
(\ref{gradest}) gives the validity of (\ref{estppp}) for 
$u \in [t_1, t_1 + \nu]$, which
implies that 
$u^{\prime \prime \prime} > t_1$. In either case,
$t_1 < u^{\prime \prime \prime} < t_2$.
Now
\begin{equation} \label{Reqal2}
R(\gamma_x(u^{\prime \prime \prime})) = \frac{1}{r(t_2)^2 - 
\eta(u^{\prime \prime \prime}-t_1)} 
> r(t_2)^{-2}.
\end{equation}
Hence there is some $\mu > 0$ so that
$R(\gamma_x(u)) \ge r(t_2)^{-2}$ for $u \in [u^{\prime \prime \prime},
u^{\prime \prime \prime} + \mu]$.
If $R(\gamma_x(u)) \ge r(t_2)^{-2}$ for all 
$u \in [t_1, u^{\prime \prime \prime}]$
then (\ref{gradest}) implies that (\ref{estppp}) holds for
$u \in [t_1, u^{\prime \prime \prime} + \mu]$, 
which contradicts the definition of
$u^{\prime \prime \prime}$. 
On the other hand, if it is not true that 
$R(\gamma_x(u)) \ge r(t_2)^{-2}$ for all 
$u \in [t_1, u^{\prime \prime \prime}]$, put
\begin{equation}
v^{\prime \prime \prime} = 
\sup \left\{u \in [t_1, u^{\prime \prime \prime}] \: : \: R(\gamma_x(u)) <
r(t_2)^{-2} \right\}.
\end{equation}
Then $v^{\prime \prime \prime} > t_1$ and
$R(\gamma_x(v^{\prime \prime \prime})) = r(t_2)^{-2}$.
The gradient estimate
(\ref{gradest}) implies that
\begin{equation}
R(\gamma_x(u^{\prime \prime \prime}) \le 
\frac{1}{r(t_2)^2 - \eta (u^{\prime \prime \prime} - 
v^{\prime \prime \prime})} <
\frac{1}{r(t_2)^2 - \eta(u^{\prime \prime \prime} -t_1)},
\end{equation}
which contradicts (\ref{Reqal2}). This proves the lemma.
\end{proof}

Hence if $x \in X_1^{\prime \prime \prime}$ then
\begin{align}
\int_{t_1}^{t_2} R(\gamma_x(u)) \: du & \le 
\int_{t_1}^{t_2} \frac{1}{r(t_2)^2 - \eta (u-t_1)} \: du \\
& =
- \: \frac{1}{\eta} \log \left( 1 - \eta r(t_2)^{-2} \cdot (t_2-t_1) \right),
\notag
\end{align}
so
\begin{equation}
\frac{\dvol_{g(t_2)}}{\dvol_{g(t_1)}} (x) = 
J_{t_2}(x) \ge \left( 1 - \eta r(t_2)^{-2} 
\cdot (t_2-t_1) \right)^{\frac{1}{\eta}}.
\end{equation}
Thus
\begin{align}
& \vol_{t_2} \left( X_1^{\prime \prime \prime} \right) -
\vol_{t_1} \left( X_1^{\prime \prime \prime} \right) \ge \\
& \int_{X_1^{\prime \prime \prime}} \left(
\left( 1 - \eta  r(t_2)^{-2} \cdot (t_2-t_1) \right)^{\frac{1}{\eta}} - 1
\right) \: \dvol_{g(t_1)}. \notag
\end{align}
Since $\eta r(t_2)^{-2} \cdot (t_2 - t_1) \in [0, 1]$, we can apply
(\ref{trivial})
to conclude that
\begin{align} \label{addup4}
\vol_{t_2} \left( X_1^{\prime \prime \prime} \right) -
\vol_{t_1} \left( X_1^{\prime \prime \prime} \right)  \ge 
& - \eta^{\frac{1}{\eta}} r(t_2)^{- \: \frac{2}{\eta}} \cdot 
(t_2-t_1)^{\frac{1}{\eta}}
\vol_{t_1} \left( X_1^{\prime \prime \prime} \right) \\
\ge &  - \eta^{\frac{1}{\eta}} r(t_2)^{- \: \frac{2}{\eta}} \V(t_1) \cdot 
(t_2-t_1)^{\frac{1}{\eta}}. \notag
\end{align}

Combining (\ref{addup1}), (\ref{addup2}), (\ref{addup3}) and (\ref{addup4})
gives (\ref{volest}).
\end{proof}

\section{Asymptotic conditions} \label{asympcon}

In this section we show that the {\it a priori} assumptions
in Definition \ref{srf} are really conditions
on the spacetime near infinity.
That is,
given $\epsilon > 0 $ and a 
decreasing function $r : [0, \infty) \ra (0, \infty)$, there are
decreasing functions $\kappa^\prime = \kappa^\prime(\epsilon) :
[0, \infty) \rightarrow (0, \infty)$ and $r^\prime = r^\prime(\epsilon,r) :
[0, \infty) \rightarrow (0, \infty)$ with the following property.
Let $\M$ be a Ricci flow spacetime with normalized initial condition,
on which condition (a) of Definition \ref{srf} holds.
Suppose that for each
$T \ge 0$ there is a compact subset of
$\M_{\le T}$ so that condition (b), and the $r$-canonical neighborhood
assumption of condition (c), hold on the part of
$\M_{\le T}$ outside of the compact
subset. Then $\M$ satisfies Definition \ref{srf}
globally with parameters $\epsilon$, $\kappa^\prime$ and $r^\prime$.

If $\M$ is a Ricci flow spacetime and $m_0 \in \M$, put $t_0 = \t(m_0)$.
We define Perelman's
$l$-function using curves emanating backward from $m_0$, as in 
\cite[Section 15]{kleiner_lott_perelman_notes}.
That is, given $m \in \M$ with $\t(m) < t_0$, consider a
time-preserving map $\gamma : [\t(m), t_0] \rightarrow \M$ from $m$ to $m_0$.
We reparametrize $[\t(m), t_0]$ by $\tau(t) = t_0 - t$. Then
\begin{equation}
{\mathcal L}(\gamma) = \int_0^{t_0 - \t(m)} 
\sqrt{\tau} \left( R(\gamma(\tau)) +
|\dot{\gamma}(\tau)|^2 \right) \: d\tau,
\end{equation}
where $\dot{\gamma}$ is the spatial projection of the velocity
vector of $\gamma$ and 
$|\dot{\gamma}(\tau)|$ is computed using the metric on 
$\M_{t_0 - \tau}$ at $\gamma(\tau)$. 
Let $L(m)$ be the infimal ${\mathcal L}$-length of such curves $\gamma$.
The reduced length is
\begin{equation}
l(m) = \frac{L(m)}{2 \sqrt{t_0 - \t(m)}}.
\end{equation}

\begin{proposition} \label{box}
Given $\overline{\Delta r}, \overline{\Delta t} > 0$
there are 
$\Delta r = \Delta r(m_0) < \overline{\Delta r}$ and
$\Delta t = \Delta t(m_0) < \overline{\Delta t}$ with the
following property.  For any $m \in \M$ with $\t(m) < t_0 - \Delta t$, 
let $d_{qp}(m, P(m_0, \Delta r, - \Delta t))$ denote the
$g_{\M}^{qp}$-distance from $m$ to the set $P(m_0, \Delta r, - \Delta t)$.
Then
\begin{equation} \label{leqn}
L(m) \ge 
\min \left( \frac{(\Delta r)^2}{4 \sqrt{\Delta t}},
\frac{\sqrt{\Delta t}}{10} 
 \: d_{qp}(m, P(m_0, \Delta r, - \Delta t)) \right) - 
\frac83 t_0^{\frac32}.
\end{equation} 
\end{proposition}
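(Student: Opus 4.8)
The plan is to prove this by the standard Perelman-style estimate on the reduced length, adapted to the Ricci flow spacetime setting where worldlines may be incomplete. The key point is that an $\L$-minimizing (or near-minimizing) curve $\gamma$ from $m$ to $m_0$ is constrained, near its endpoint $m_0$, to stay inside a parabolic neighborhood $P(m_0,\Delta r,-\Delta t)$ provided $\Delta r$ and $\Delta t$ are chosen small enough that this parabolic neighborhood is unscathed and has controlled geometry. We first fix $\Delta r < \overline{\Delta r}$ and $\Delta t < \overline{\Delta t}$ small enough that $P(m_0,\Delta r,-\Delta t)$ is unscathed; this is possible because the worldline of $m_0$ is defined on some interval $(\t(m_0) - a, \cdot]$ with $a > 0$, and the set of points with worldlines surviving backward for time $\Delta t$ through $B(m_0,\Delta r)$ is open and contains $m_0$ — so shrinking $\Delta r$ and $\Delta t$ achieves unscathedness, together with a uniform two-sided curvature bound (say $|\Rm| \le \Lambda$ and hence $|R| \le C\Lambda$) on $P(m_0,\Delta r,-\Delta t)$, depending on $m_0$.

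Next, I would split into two cases according to whether the curve $\gamma$ stays inside $P(m_0,\Delta r,-\Delta t)$ for the last portion of its time interval or leaves it. More precisely, let $\gamma:[\t(m),t_0]\to\M$ be time-preserving from $m$ to $m_0$, reparametrized by $\tau = t_0 - t$ so that $\gamma(0) = m_0$. Let $\tau_* \le \Delta t$ be the first (smallest) $\tau$ at which $\gamma(\tau)$ exits $P(m_0,\Delta r,-\Delta t)$; if $\gamma$ never exits on $[0,\Delta t]$, take $\tau_* = \Delta t$ — but then, since $\t(m) < t_0 - \Delta t$, the curve must eventually leave anyway (as it has to reach $m$ which is outside), so in fact we may assume $\gamma$ exits at some $\tau_* \le t_0 - \t(m)$. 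In the first case, where $\gamma$ exits through the spatial boundary $\partial B(m_0,\Delta r) \times \{\text{some time}\} $ — i.e.\ the point $\gamma(\tau_*)$ has $\dist_{g(t_0-\tau_*)}$-distance $\Delta r$ from the worldline of $m_0$ — the portion of $\gamma$ on $[0,\tau_*]$ has traversed spatial distance at least $\Delta r$ in time $\tau_* \le \Delta t$, so by Cauchy–Schwarz $\int_0^{\tau_*}\sqrt\tau |\dot\gamma|^2\,d\tau \ge \Delta r^2 / (4\sqrt{\Delta t})$ (after discarding the $R$ term using the lower bound on $R$ and absorbing it into the $-\frac83 t_0^{3/2}$ correction, exactly as in \cite[Section 15–17]{kleiner_lott_perelman_notes}); this gives the first term in the minimum.

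In the second case, $\gamma(\tau_*)$ lies on the time-boundary slice $\M_{t_0 - \Delta t}$ of the parabolic neighborhood, and then I bound $\L(\gamma)$ from below by the $\L$-length of the sub-curve from $\gamma(\tau_*)$ back to $m$, which lives entirely in $\t^{-1}([\t(m), t_0 - \Delta t])$. Here I use the $g_\M^{qp}$-quasiparabolic distance: the quasiparabolic length of a time-preserving curve covering a $g_\M^{qp}$-distance $D$ forces either a large time-elapse or a large spatial-length contribution, and in either case, a weighted Cauchy–Schwarz argument with the weight $\sqrt\tau \ge \sqrt{\Delta t}$ (valid since we are now at times $\tau \ge \Delta t$) produces $\int \sqrt\tau |\dot\gamma|^2 \ge \frac{\sqrt{\Delta t}}{10} d_{qp}(m, P(m_0,\Delta r,-\Delta t))$ up to the same additive correction absorbing the $R$ lower bound. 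The constant $\frac{1}{10}$ is deliberately lossy to absorb the relation between $g_\M^{qp}$ and the spatial/temporal metrics. Combining the two cases and using that we may always discard the initial segment's $R$-contribution via $R \ge -\frac{3}{1+2\t(\cdot)} \ge -3$ and $\int_0^{t_0}\sqrt\tau\cdot 3\,d\tau = 2 t_0^{3/2} \le \frac83 t_0^{3/2}$ gives (\ref{leqn}).

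The main obstacle I expect is making the second case fully rigorous: one must carefully relate the quasiparabolic metric $g_\M^{qp}$ (whose precise definition is in \cite{kleiner_lott_singular_ricci_flows}) to the spatial metrics on time slices and to elapsed time along $\gamma$, and verify that the weighted integral $\int_0^{\tau}\sqrt{s}\,(R+|\dot\gamma|^2)\,ds$ dominates a fixed multiple of the $g_\M^{qp}$-length of $\gamma$ restricted to times $\ge \Delta t$ — the factor $\sqrt{\tau} \ge \sqrt{\Delta t}$ is what rescues this. The choice of $\Delta r,\Delta t$ depending on $m_0$ (rather than being uniform) is exactly what gives us the room to ensure the parabolic neighborhood is unscathed with bounded geometry, so no uniformity issue arises; the delicacy is purely in the comparison of metrics and the bookkeeping of the $-\frac83 t_0^{3/2}$ error term.
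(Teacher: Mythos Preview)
Your overall strategy matches the paper's: choose $\Delta r,\Delta t$ so that $P(m_0,\Delta r,-\Delta t)$ is unscathed with controlled geometry, split according to whether $\gamma$ exits through the spatial boundary (giving the $\frac{(\Delta r)^2}{4\sqrt{\Delta t}}$ term via Cauchy--Schwarz) or remains inside until $\tau=\Delta t$, and absorb the negative part of $R$ into the $-\frac83 t_0^{3/2}$ error.

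There is, however, a real gap in your Case 2. You propose to discard the $R$ term (via $R\ge -3$) and then bound $\int\sqrt\tau\,|\dot\gamma|^2\,d\tau$ below by $\frac{\sqrt{\Delta t}}{10}\,d_{qp}$. This cannot work: the quasiparabolic speed satisfies $|d\gamma/d\tau|_{g_\M^{qp}}^2=\sqrt{1+R^2}\,|\dot\gamma|^2+(1+R^2)$, so a curve can have arbitrarily large $qp$-length purely because $R$ is large along it, with small spatial speed $|\dot\gamma|$ and bounded elapsed time. Your assertion that large $qp$-distance ``forces either a large time-elapse or a large spatial-length contribution'' is exactly where this breaks. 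You do say in your final paragraph that the full integrand $R+|\dot\gamma|^2$ must be compared to the $qp$-speed, but then the $R$ term is doing essential positive work and cannot simultaneously be thrown away into the additive error.

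The paper's device is to set $R_+=\max(R,1)$ and $\L_+(\gamma)=\int_0^{t_0-\t(m)}\sqrt\tau\,(R_++|\dot\gamma|^2)\,d\tau$. Since $R\ge -3$ one has $\L(\gamma)\ge\L_+(\gamma)-\frac83 t_0^{3/2}$, and this is the only place the additive error enters. In Case 2 one then uses the pointwise algebraic inequality
\[
R_++|\dot\gamma|^2\;\ge\;\tfrac{1}{10}\sqrt{\sqrt{1+R^2}\,|\dot\gamma|^2+1+R^2},
\]
valid because $\sqrt{1+R^2}\le 10R_+$ when $R\ge -3$, together with $\sqrt\tau\ge\sqrt{\Delta t}$ for $\tau\ge\Delta t$, to obtain $\L_+(\gamma)\ge\frac{\sqrt{\Delta t}}{10}\,d_{qp}(m,P(m_0,\Delta r,-\Delta t))$ directly. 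No Cauchy--Schwarz is used in this case; the comparison is pointwise.
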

\begin{proof}
With a slight variation on Perelman's definition
\cite[Definition 79.1]{kleiner_lott_perelman_notes}, we put
\begin{equation}
{\mathcal L}_+(\gamma) = \int_0^{t_0 - \t(m)} 
\sqrt{\tau} \left( R_+(\gamma(\tau)) +
|\dot{\gamma}(\tau)|^2 \right) \: d\tau,
\end{equation}
where $R_+(m) = \max(R(m), 1)$. We define $L_+(m)$ using
${\mathcal L}_+(\gamma)$ instead of ${\mathcal L}(\gamma)$.
Applying the lower curvature bound
\cite[(5.3)]{kleiner_lott_singular_ricci_flows} (with $C = n=3$), we know that $R \ge -3$ and so
\begin{align}
{\mathcal L}(\gamma) - {\mathcal L}_+(\gamma) = &
\int_0^{t_0 - \t(m)} 
\sqrt{\tau} \left( R(\gamma(\tau)) -
 R_+(\gamma(\tau)) \right) \: d\tau \\
\ge & - \int_0^{t_0 - \t(m)} 
\sqrt{\tau} \cdot 4 \: d\tau \ge - \frac83 t_0^{\frac32}. \notag
\end{align}
Hence it suffices to estimate ${\mathcal L}_+(\gamma)$ from below.

Given numbers $\Delta r, \Delta t > 0$, if $\t(m) < t_0 - \Delta t$ then
\begin{equation} \label{pareu}
{\mathcal L}_+(\gamma) \ge \int_0^{\Delta t} \sqrt{\tau} 
|\dot{\gamma}(\tau)|^2 \: d\tau = 
\frac12 \int_0^{\sqrt{\Delta t}}
\left|\frac{d\gamma}{ds} \right|^2 \: ds.
\end{equation}
Suppose first that $\gamma$ leaves $m_0$ and exits
$P(m_0, \Delta r, - \Delta t)$ at some time $t \in (t_0 - \Delta t, t_0)$.
If the parabolic ball were Euclidean then we could say from (\ref{pareu}) that
${\mathcal L}_+(\gamma) \ge \frac12 \frac{(\Delta r)^2}{\sqrt{\Delta t}}$. 
If $\Delta r$ and $\Delta t$ are small enough,
depending on $m_0$, then we can still say that
$P(m_0, \Delta r, - \Delta t)$ is unscathed and 
${\mathcal L}_+(\gamma) \ge \frac14 \frac{(\Delta r)^2}{\sqrt{\Delta t}}$.

Given such values of $\Delta r$ and $\Delta t$, 
suppose now that $\gamma$ does not exit
$P(m_0, \Delta r, - \Delta t)$ in the time interval $(t_0 - \Delta t, t_0)$.
Then $\gamma(t_0 - \Delta t) \in P(m_0, \Delta r, - \Delta t)$.
Now
\begin{equation}
{\mathcal L}_+(\gamma) \ge \sqrt{\Delta t} \int_{\Delta t}^{t_0 - \t(m)} 
\left( R_+(\gamma(\tau)) +
|\dot{\gamma}(\tau)|^2 \right) \: d\tau,
\end{equation}

Since $R \ge -3$, it follows that along $\gamma$, we have
\begin{equation}
\sqrt{1+R^2} \le 10 R_+.
\end{equation}
Then 
\begin{align}
\sqrt{1+R^2} |\dot{\gamma}|^2 + 1 + R^2 \le
& 10 R_+ |\dot{\gamma}|^2 + 100 R_+^2 \\
\le &
100 \left( |\dot{\gamma}|^4 + 2 R_+ |\dot{\gamma}|^2 + R_+^2 \right), \notag 
\end{align}
so 
\begin{equation}
\sqrt{\sqrt{1+R^2} |\dot{\gamma}|^2 + 1 + R^2} \le
10 \left( |\dot{\gamma}|^2 + R_+ \right).
\end{equation}
Thus 
\begin{align}
{\mathcal L}_+(\gamma) \ge & \frac{\sqrt{\Delta t}}{10}
\int_{\Delta t}^{t_0 - \t(m)}
\left| \frac{d\gamma}{d\tau} \right|_{g_{\M}^{qp}} \: d\tau \\
\ge & \frac{\sqrt{\Delta t}}{10} 
d_{qp}(m, P(m_0, \Delta r, - \Delta t)). \notag
\end{align}
The proposition follows.
\end{proof}

\begin{corollary} \label{boxx}
Suppose that 
$(\M_{\le t_0}, g_{\M}^{qp})$ is complete away from the time-zero slice
and the time-$t_0$ slice.
Given $t^\prime < t_0$, the restriction of $L$ to
$\M_{\le t^\prime}$ is proper and bounded below.
\end{corollary}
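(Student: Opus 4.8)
\emph{Proof proposal.} The plan is to read off both assertions from Proposition~\ref{box}. For boundedness below, I would apply that proposition with $\overline{\Delta t}$ chosen in $(0, t_0 - t')$ and $\overline{\Delta r}$ arbitrary, producing $\Delta r = \Delta r(m_0)$ and $\Delta t = \Delta t(m_0)$. Every $m \in \M_{\le t'}$ satisfies $\t(m) \le t' < t_0 - \Delta t$, so the proposition applies; since both arguments of the minimum in (\ref{leqn}) are nonnegative, $L(m) \ge - \tfrac83 t_0^{\frac32}$. Adopting the convention $L(m) = +\infty$ when no time-preserving curve from $m$ to $m_0$ exists, this shows that $L|_{\M_{\le t'}}$ is bounded below.

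For properness, recall that $L$ is continuous where finite (a standard property of the reduced-length function on $\M_{< t_0}$). Since $L|_{\M_{\le t'}}$ is bounded below, it suffices to show that each sublevel set $S_C := \{ m \in \M_{\le t'} : L(m) \le C \}$, $C < \infty$, is compact: the preimage of any compact $K \subset \R$ is then a closed subset of $S_C$ for $C = \max K$, hence compact. The set $S_C$ is closed, so the point is to prove it precompact. Suppose not; then there is a sequence $m_i \in S_C$ leaving every compact subset of $\M_{\le t'}$.

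The structural input I would use here is that, by the standing completeness hypothesis together with the fact that the spacetime is compact near $\M_0$ (the normalized initial condition makes the flow smooth with uniformly bounded curvature on $\M_{[0,c]} \cong [0,c] \times \M_0$ for some $c > 0$, while the time-$t_0$ slice is irrelevant since $t' < t_0$), every set of the form $\{ x \in \M_{\le t'} : d_{qp}(x, K) \le D \}$ with $K \subset \M_{\le t_0}$ compact and $D < \infty$ has compact closure. Consequently $d_{qp}(m_i, K) \to \infty$ for every compact $K \subset \M_{\le t_0}$; otherwise a subsequence of $\{m_i\}$ would lie in some such set $\{ x \in \M_{\le t'} : d_{qp}(x, K) \le D \}$, hence in a fixed compact subset of $\M_{\le t'}$, contradicting the choice of the $m_i$.

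Finally I would return to the proof of Proposition~\ref{box} rather than its statement: there $\Delta r$ and $\Delta t$ need only be small enough that $P(m_0, \Delta r, -\Delta t)$ is unscathed and nearly Euclidean, so they may be taken arbitrarily small, and $\Delta t$ in particular as small as we wish relative to $\Delta r$. Hence for each $n \in \N$ we may fix $\Delta r_n < \overline{\Delta r}$ and $\Delta t_n \in (0, t_0 - t')$ with $\tfrac{(\Delta r_n)^2}{4\sqrt{\Delta t_n}} \ge n$ and with $P_n := P(m_0, \Delta r_n, -\Delta t_n)$ unscathed, so that $\overline{P_n}$ is compact. Proposition~\ref{box} then gives, for all $m \in \M_{\le t'}$,
\[
L(m) \ge \min\Bigl( n,\ \tfrac{\sqrt{\Delta t_n}}{10}\, d_{qp}\bigl(m, P_n\bigr) \Bigr) - \tfrac83 t_0^{\frac32}.
\]
Fixing $n$ and letting $i \to \infty$, the previous paragraph gives $d_{qp}(m_i, P_n) \to \infty$, hence $L(m_i) \ge n - \tfrac83 t_0^{\frac32}$ for all large $i$; as $n$ is arbitrary, $L(m_i) \to \infty$, contradicting $m_i \in S_C$. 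Thus $S_C$ is precompact, and the corollary follows. I expect the delicate points to be the verification that $g_\M^{qp}$-bounded subsets of $\M_{\le t'}$ are precompact — chiefly the behavior near $\M_0$, handled via short-time smoothness — and the extraction of the bound $\tfrac{(\Delta r_n)^2}{4\sqrt{\Delta t_n}} \ge n$ from the proof of Proposition~\ref{box}.
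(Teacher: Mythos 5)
Your argument is correct and is essentially the paper's own proof: boundedness below comes from the nonnegativity of both terms in the minimum in (\ref{leqn}), and properness follows by contradiction, using completeness of $g_{\M}^{qp}$ to force $d_{qp}(m_i, P(m_0,\Delta r,-\Delta t))\to\infty$ along a sequence escaping to infinity, after choosing $\Delta t < t_0 - t'$ and $(\Delta r)^2/(4\sqrt{\Delta t})$ large. Your extra care about why $\Delta t$ may be shrunk relative to $\Delta r$ (going back to the proof of Proposition~\ref{box} rather than its statement) and about precompactness of $d_{qp}$-bounded sets near $\M_0$ fills in details the paper leaves implicit, but the route is the same.
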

\begin{proof}
From (\ref{leqn}), the function $L$ is bounded below on $\M_{\le t^\prime}$.
Suppose that it is not proper.  Then for some $C < \infty$, there
is a sequence $\{m_i\}_{i=1}^\infty$ in $\M_{\le t^\prime}$ going to
infinity with $L(m_i) < C$ for all $i$. We can choose
$\Delta r,\Delta t > 0$ with $\Delta t < t_0 - t^\prime$ and
\begin{equation}
\frac{(\Delta r)^2}{4 \sqrt{\Delta t}} - \frac{8}{3} t_0^{\frac32}
\ge C.
\end{equation}
By the completeness of $g_{\M}^{qp}$,
we have $\lim_{i \rightarrow \infty}
d_{qp}(m_i, P(m_0, \Delta r, - \Delta t)) = \infty$.
Then (\ref{leqn}) gives a contradiction..
\end{proof}

It is not hard to see that $L$ is continuous on $\M_{< t_0}$.
From the proof of Proposition \ref{box}, given $m \in \M_{< t_0}$
and $K < \infty$, the time-preserving curves $\gamma \: : \:
[\t(m), t_0] \rightarrow \M$ from $m$ to $m_0$ with 
${\mathcal L}(\gamma) < K$ lie in a compact subset of $\M$.
From standard arguments \cite[Section 17]{kleiner_lott_perelman_notes},
it follows that there is an ${\mathcal L}$-minimizer from
$m_0$ to $m$. 

Since $L$ is bounded below and time-slices have finite volume
from \cite[Corollary 7.7]{kleiner_lott_singular_ricci_flows}, the reduced volume
$\tilde{V}(\tau) = \tau^{- \: \frac32} \int_{\M_{t_0 - \tau}}
e^{-l} \dvol$ exists.  The results of 
\cite[Sections 17-29]{kleiner_lott_perelman_notes} go through in our setting.
In particular, $\tilde{V}(\tau)$ is nonincreasing in $\tau$.

\begin{proposition} \label{smalll}
Suppose that 
$(\M_{\le t_0}, g_{\M}^{qp})$ is complete away from the time-zero slice
and the time-$t_0$ slice.
For every $t \in [0, t_0)$, there is some $m \in \M_t$ with
$l(m) \le \frac32$.
\end{proposition}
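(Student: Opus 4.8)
The plan is to adapt Perelman's argument bounding the reduced length along a minimizing $\L$-geodesic, as in \cite[Sections 23--26]{kleiner_lott_perelman_notes}, to the singular Ricci flow setting. The key point that makes this work is the monotonicity of the reduced volume $\tilde V(\tau)$, which we have just established, combined with the comparison of $\tilde V$ with its value for Euclidean space. First I would fix $t \in [0, t_0)$ and set $\bar\tau = t_0 - t$, and observe that if the conclusion fails then $l(m) > \frac32$ for \emph{all} $m \in \M_t$, i.e. on the entire time slice $\M_{t_0 - \bar\tau}$.

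Next I would run the standard estimate: for $\tau$ small, the existence of an $\L$-minimizer to some point near $m_0$ (guaranteed by the discussion preceding the proposition, since $\M_{\le t_0}$ is complete away from the two boundary slices) together with the normalized initial condition gives $\liminf_{\tau \to 0^+} \tilde V(\tau) \le \omega_3$, where $\omega_3$ is the volume of the unit Euclidean $3$-ball; in fact the standard computation gives $\lim_{\tau \to 0^+} \tilde V(\tau) = (4\pi)^{3/2}$ for the $\R^3$ model, and more relevantly $\tilde V(\tau) \le (4\pi)^{3/2}$ near $\tau = 0$ using the curvature bounds on $\M_0$. By monotonicity, $\tilde V(\bar\tau) \le (4\pi)^{3/2}$. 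On the other hand, if $l \ge \frac32$ everywhere on $\M_{t_0 - \bar\tau}$ then
\begin{equation}
\tilde V(\bar\tau) = \bar\tau^{-3/2} \int_{\M_{t_0 - \bar\tau}} e^{-l} \: \dvol \le \bar\tau^{-3/2} e^{-3/2} \vol(\M_{t_0 - \bar\tau}).
\end{equation}
This does not immediately give a contradiction because $\vol(\M_{t_0 - \bar\tau})$ could be large; the genuine content of Perelman's argument is subtler. The correct route is the pointwise one: along a fixed minimizing $\L$-geodesic $\gamma$ from $m_0$ backward, one shows $\min_{\M_{t_0-\tau}} l(\cdot) \le \frac32$ by a maximum-principle / barrier argument applied to the function $\bar L = 2\sqrt{\tau}\, L$, using the differential inequality $\partial_\tau \bar L + \Delta \bar L \le 2n$ in the barrier sense (\cite[(24.?)]{kleiner_lott_perelman_notes}), which here is valid because $L$ is continuous, bounded below, proper on $\M_{\le t'}$ (Corollary \ref{boxx}), and the minimizers stay in compact sets (so there are no issues at infinity or at the singular set, which has measure zero).

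So the key steps, in order, are: (1) reduce to showing $\min_{\M_t} l \le \frac32$; (2) recall from the preceding discussion that $\L$-minimizers exist and stay in compact subsets of $\M$, so all the local PDE arguments of \cite[Sections 17--29]{kleiner_lott_perelman_notes} apply verbatim on the relevant compact regions; (3) apply the barrier inequality for $h(\tau) = \min_{\M_{t_0 - \tau}} \bar L$ to get $\frac{d}{d\tau}\left(\frac{h(\tau)}{2\sqrt\tau} + \text{correction}\right) \le 0$ type control, or more directly invoke \cite[Section 25]{kleiner_lott_perelman_notes} which gives exactly $\min l(\cdot, \tau) \le \frac{n}{2} = \frac32$ for all $\tau$ where $l$ is defined, as a consequence of $\tilde V$ monotonicity and the fact that $\tilde V(\tau) \to (4\pi)^{n/2}$ as $\tau \to 0$; (4) conclude. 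The main obstacle is step (2): one must check carefully that the presence of the singular set and the incompleteness of the time slices do not obstruct either the existence of $\L$-minimizers reaching every point of $\M_t$ or the validity of the barrier argument — but this is precisely what Corollary \ref{boxx} and the remarks immediately after Proposition \ref{box} are set up to provide (properness and lower-boundedness of $L$, minimizers lying in compact sets), so the argument reduces to citing that the classical proof goes through unchanged.
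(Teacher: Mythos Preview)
Your proposal is correct and, after the initial detour through reduced volume (which you rightly abandon), lands on essentially the same argument as the paper: apply the barrier differential inequality $\partial_\tau \bar L + \Delta \bar L \le 2n = 6$ together with the properness of $L$ from Corollary~\ref{boxx} to run a maximum principle on $-\bar L + 6\tau$, and use the small-$\tau$ behavior near $m_0$ to force $\min_{\M_t} l \le \tfrac32$. The paper invokes \cite[Lemma 5.1]{kleiner_lott_singular_ricci_flows} for the maximum principle step and the quadratic bound $\bar L(\gamma(t_0-\tau)) \le \const\,\tau^2$ along the worldline through $m_0$ for the initial condition; your step~(3) is exactly this, though your parenthetical attribution of the $\min l \le \tfrac{n}{2}$ bound to reduced volume monotonicity is not quite how it goes---it comes directly from the barrier argument.
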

\begin{proof}
Putting $\overline{L} = 2  \sqrt{t_0 - \t} \: L$, we have
\begin{equation}
\partial_{\t} (- \overline{L} + 6(t_0 - \t)) \le 
 \triangle (- \overline{L} + 6(t_0 - \t))
\end{equation}
in the barrier sense \cite[Section 24]{kleiner_lott_perelman_notes}.
From Corollary \ref{boxx}, for each $\widetilde{t}^\prime \in 
[0, t_0)$, the function $- \overline{L} + 6(t_0 - \t)$ is 
proper and bounded above on $\M_{\le \widetilde{t}^\prime}$.
In particular, for each $t \in [0, t_0)$, the
maximum of  $- \overline{L} + 6(t_0 - t)$ exists on $\M_t$. We want
to show that the maximum is nonnegative.
By way of contradiction,
suppose that for some $\widetilde{t} \in [0, t_0)$ and some $\alpha < 0$,
we have $- \overline{L}(m) + 6(t_0 - \widetilde{t}) \le \alpha$ for
all $m \in \M_{\widetilde{t}}$. Given $\widetilde{t}^\prime \in 
(\widetilde{t}, t_0)$, we can apply \cite[Lemma 5.1]{kleiner_lott_singular_ricci_flows}
on the interval $[\widetilde{t}, \widetilde{t}^\prime]$ to conclude
that $- \overline{L}(m) + 6(t_0 - \widetilde{t}^\prime) \le \alpha$
for all $m \in \M_{\widetilde{t}^\prime}$. 
However,
along the worldline $\gamma$ going through $m_0$, for small $\tau > 0$ we have
\begin{equation} \label{quadr}
\overline{L}(\gamma(t_0 - \tau)) \le \const \tau^2.
\end{equation}
Then for small $\tau$, we have
$- \overline{L}(\gamma(t_0 - \tau)) + 6\tau > 0$. Taking
$\widetilde{t}^\prime = t_0 - \tau$
gives a contradiction and proves the proposition. 
\end{proof}

In his first Ricci flow paper, Perelman showed that there is a
decreasing function $\kappa^\prime \: : \:
[0, \infty) \rightarrow (0, \infty)$ with the property that
if $\M$ is a smooth Ricci flow solution, with normalized
initial conditions, then $\M$ is $\kappa^\prime$-noncollapsed at
scales less than $\epsilon$ \cite[Theorem 26.2]{kleiner_lott_perelman_notes}. 

\begin{proposition} \label{newkappa}
Let $\M$ be a Ricci flow spacetime
 with normalized initial condition. 
Given $t^\prime > 0$, 
suppose that
$(\M_{\le t^\prime}, g_{\M}^{qp})$ is complete away from the time-zero slice
and the time-$t^\prime$ slices.
Then $\M_{\le t^\prime}$ 
is $\kappa^\prime$-noncollapsed at scales less
than $\epsilon$.
\end{proposition}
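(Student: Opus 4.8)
The plan is to follow Perelman's original $\kappa$-noncollapsing argument (as presented in \cite[Sections 25--26]{kleiner_lott_perelman_notes}), using the reduced volume monotonicity that has just been set up in this section as the crucial replacement for the smooth-flow input. Fix a point $m_0 \in \M_{t'}$ and a scale $\rho < \epsilon$, and suppose $|\Rm| \le \rho^{-2}$ on the parabolic ball $P(m_0, \rho, -\rho^2)$ (assumed unscathed); we must bound $\vol B(m_0, \rho)$ from below by $\kappa' \rho^n$. Set $t_0 = \t(m_0)$ (after possibly shrinking $t'$ we may assume $t_0 = t'$), and apply the $L$-geometry developed above with basepoint $m_0$. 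The key objects are the reduced length $l$ and the reduced volume $\tilde V(\tau) = \tau^{-3/2}\int_{\M_{t_0-\tau}} e^{-l}\,\dvol$, which exists and is nonincreasing in $\tau$ by the discussion preceding Proposition \ref{smalll}, since $L$ is bounded below (Corollary \ref{boxx}) and the time slices have finite volume.

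The argument then proceeds in the standard two steps. First, one shows $\tilde V$ is bounded below at small $\tau$ near the ``good'' region: using the curvature bound on $P(m_0,\rho,-\rho^2)$ and the quadratic estimate \eqref{quadr} for $\overline L$ along the worldline through $m_0$, one checks that for $\tau$ of order $\rho^2$ the contribution to $\tilde V(\tau)$ from curves landing in $B(m_0,\rho)$ at time $t_0-\tau$ is bounded below by a universal multiple of $\tau^{-3/2}\vol B(m_0,\rho)$ — this is exactly Perelman's computation \cite[Section 26]{kleiner_lott_perelman_notes}, and it is where the hypothesized lower volume bound enters: a collapsed ball forces $\tilde V(\rho^2)$ to be tiny. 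Second, one shows $\tilde V(\tau)$ is bounded below as $\tau \to t_0$ by a universal constant: by Proposition \ref{smalll}, for each such $\tau$ there is a point $m \in \M_{t_0-\tau}$ with $l(m) \le \tfrac32$; a Heintze--Karcher / Bishop--Gromov type comparison along $\L$-geodesics emanating from $m_0$ through a neighborhood of $m$ (again as in \cite[Sections 27--28]{kleiner_lott_perelman_notes}, using the Hamilton--Ivey pinching to control the relevant integrands) gives $\tilde V(\tau) \ge c_0 > 0$ with $c_0$ universal. Combining monotonicity $\tilde V(\rho^2) \ge \tilde V(t_0) \ge c_0$ with the upper bound from the first step yields $\vol B(m_0,\rho) \ge \kappa' \rho^n$ for a universal $\kappa'$; taking the infimum over $m_0$ and $\rho < \epsilon$ finishes the proof. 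One should also handle the case where $P(m_0,\rho,-\rho^2)$ is scathed or $B(m_0,\rho)$ fails to have compact closure separately — but then one is near the time-zero slice, where normalization of the initial condition and a short-time argument (or Perelman's original no-local-collapsing for the smooth initial flow) give the bound directly; the completeness hypothesis on $g_\M^{qp}$ away from the two boundary slices is what guarantees the $\L$-geodesics and the compact exhaustion behave as in the smooth case.

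The main obstacle is the second step: establishing the universal lower bound on $\tilde V(\tau)$ as $\tau \to t_0$. In Perelman's setting this rests on the fact that the $\L$-exponential map is defined on all of the tangent space and on Jacobian comparison for $\L$-geodesics; here one must check that these $\L$-geodesics, which a priori could run into a singular region of the spacetime, in fact stay in a compact part of $\M$ — this is precisely the content of the remark following Corollary \ref{boxx} (curves of bounded $\L$-length lie in a compact set) together with the existence of $\L$-minimizers, so the $\L$-geometry is as well-behaved as in the smooth case on $\M_{<t_0}$. Once that is granted, the comparison geometry and the reduced-volume estimate go through verbatim, with the Hamilton--Ivey pinching condition (b) of Definition \ref{srf} supplying the curvature control needed to bound the comparison integrands, exactly as in the analysis of smooth three-dimensional Ricci flows.
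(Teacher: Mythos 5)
Your proposal follows essentially the same route as the paper: run Perelman's argument for \cite[Theorem 26.2]{kleiner_lott_perelman_notes} with basepoint $m_0$, use Proposition \ref{smalll} to produce a point $m$ with $l(m)\le\frac32$ in an early time slice, and close via monotonicity of $\tilde V$. One correction to your second step: the universal lower bound $\tilde V(\tau)\ge c_0$ does not come from Hamilton--Ivey pinching or an $\L$-geodesic comparison --- it comes from taking the early slice to be a fixed time like $\overline t=\frac1{100}$, where the normalized initial condition gives bounded geometry near $m$ (curvature bounds and a lower volume bound on $B(m,1)$), so that $l$ is bounded on a ball of definite volume and $\int e^{-l}\,\dvol$ is bounded below directly; the comparison geometry enters only in the monotonicity of $\tilde V$, which you have already invoked separately.
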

\begin{proof}
The proof is along the lines of that of
\cite[Theorem 26.2]{kleiner_lott_perelman_notes}. 
We can assume that $t^\prime > \frac{1}{100}$.
To prove $\kappa^\prime$-noncollapsing near 
$m_0 \in \M_{\le t^\prime}$,
we consider ${\mathcal L}$-curves emanating backward
in time from $m_0$ to a fixed time slice $\M_{\overline{t}}$, say with
$\overline{t} = \frac{1}{100}$. By Proposition \ref{smalll}, there 
is some $m \in \M_{\overline{t}}$ with $l(m) \le \frac32$.
Using the bounded geometry near $m$ and the monotonicity of 
$\tilde{V}$, the $\kappa^\prime$-noncollapsing follows as in
\cite[Pf. of Theorem 26.2]{kleiner_lott_perelman_notes}.
\end{proof}

We now show that the conditions in Definition \ref{srf}, to define
a singular Ricci flow, are actually asymptotic in nature.

\begin{proposition} \label{asympprop}
Given $\epsilon > 0$, $t^\prime < \infty$ and a decreasing function 
$r \: : \: [0, t^\prime] \rightarrow (0, \infty)$, there is some
$r^\prime = r^\prime(\epsilon, r) > 0$
with the following property.
Let $\M$ be a Ricci flow spacetime
such that $R \: : \: \M_{\le t^\prime} \rightarrow \R$ is
bounded below and proper, and
there is a compact set $K \subset \M_{\le t^\prime}$ so that for
each $m \in \M_{\le t^\prime} - K$,
\begin{enumerate}
\renewcommand{\theenumi}{\alph{enumi}}
\item The Hamilton-Ivey pinching condition of
  \cite[(A.14)]{kleiner_lott_singular_ricci_flows} is satisfied at
$m$, with time parameter $\t(m)$, and
\item The $r$-canonical neighborhood assumption of
  \cite[Appendix A.8]{kleiner_lott_singular_ricci_flows} is
satisfied at $m$.
\end{enumerate}
Then the conditions of Definition \ref{srf}
hold on $\M_{\le t^\prime}$, with parameters $\epsilon$, $\kappa^\prime$ and
$r^\prime$.
\end{proposition}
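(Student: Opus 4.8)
The plan is to reduce Proposition \ref{asympprop} to the three structural results just established, namely the completeness/properness control coming from Proposition \ref{box} and Corollary \ref{boxx}, the existence of a small-reduced-length point from Proposition \ref{smalll}, and the $\kappa'$-noncollapsing from Proposition \ref{newkappa}. First I would observe that condition (a) of Definition \ref{srf} — that $R$ is bounded below and proper on $\M_{\le T}$ for all $T\le t'$ — is part of the hypothesis, so nothing is needed there. The Hamilton–Ivey pinching condition (b) holds outside the compact set $K$ by assumption; inside $K$ it holds automatically, since $K$ is a compact subset of a smooth Ricci flow spacetime with normalized initial data, so the curvatures are bounded on $K$ and the pinching inequality, being an open condition that holds at $t=0$ and is preserved under smooth Ricci flow with bounded curvature (Hamilton–Ivey), holds there as well. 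Thus (b) holds globally on $\M_{\le t'}$.

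Next I would handle the quasi-parabolic completeness hypothesis needed to invoke the earlier propositions. The point is that properness of $R$ on $\M_{\le t'}$, together with the $r$-canonical neighborhood assumption holding outside $K$, forces $(\M_{\le t'}, g_\M^{qp})$ to be complete away from the time-zero and time-$t'$ slices: along any path escaping to infinity in $\M_{\le t'}$, the scalar curvature blows up (by properness), so the path enters the canonical-neighborhood region, where the local geometry is controlled and the quasi-parabolic metric, which rescales lengths by a factor growing like $R^{1/2}$ near high-curvature regions, assigns such an escaping path infinite length. (This is the same mechanism used implicitly in \cite{kleiner_lott_singular_ricci_flows}.) With this in hand, Corollary \ref{boxx}, Proposition \ref{smalll} and Proposition \ref{newkappa} all apply, so $\M_{\le t'}$ is $\kappa'$-noncollapsed below scale $\epsilon$, with $\kappa'$ the universal decreasing function of Perelman. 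This gives the noncollapsing half of condition (c).

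For the canonical neighborhood half of (c), the task is to produce $r' = r'(\epsilon, r)$ so that the $r'$-canonical neighborhood assumption holds at every point of $\M_{\le t'}$, not just outside $K$. Outside $K$ it holds with parameter $r$, hence with the smaller parameter $r'$ once $r' \le r$. Inside $K$: since $K$ is compact and sits inside a smooth Ricci flow spacetime, $R$ is bounded on $K$, say $R \le R_K$ on $K$; points with $R(m) \le r(\t(m))^{-2}$ are exempt from the canonical neighborhood assumption, so by choosing $r'$ small enough that $(r'(T))^{-2} \ge R_K$ for all $T \le t'$ — i.e. $r' \le \min(r, R_K^{-1/2})$ — every point of $K$ trivially satisfies the $r'$-canonical neighborhood assumption. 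Hence with $r' = \min\bigl(\inf_{[0,t']} r,\ R_K^{-1/2}\bigr)$, condition (c) holds globally, and all of Definition \ref{srf} is verified with parameters $\epsilon$, $\kappa'$, $r'$.

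The main obstacle I anticipate is the completeness claim in the second step: one has to argue carefully that properness of $R$ plus the canonical-neighborhood structure outside $K$ really does imply $g_\M^{qp}$-completeness away from the boundary time-slices, handling the possibility that a path escapes to infinity while its time coordinate stays bounded away from $0$ and $t'$ — here one uses that along such a path $R \to \infty$, so eventually it lies in the canonical-neighborhood region, where $\epsilon$-necks and caps have definite $g_\M^{qp}$-size and the metric $g_\M^{qp} = R\,(g + d\t^2)$ (up to normalization) diverges in length. A secondary point to get right is the in-$K$ verification of Hamilton–Ivey pinching, which requires that the flow restricted to a neighborhood of $K$ is genuinely a smooth Ricci flow with bounded curvature so that the maximum principle argument of Hamilton–Ivey applies; this is immediate from the definition of Ricci flow spacetime and compactness of $K$, but should be stated.
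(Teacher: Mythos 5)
Your treatment of conditions (a) and (b), the $g_{\M}^{qp}$-completeness, and the $\kappa'$-noncollapsing (via Propositions \ref{box}--\ref{newkappa}) tracks the paper's argument. The problem is in your final step, the canonical neighborhood assumption on $K$.

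You set $r' = \min\bigl(\inf_{[0,t']} r,\ R_K^{-1/2}\bigr)$ with $R_K = \sup_K R$, so that no point of $K$ triggers the hypothesis of the $r'$-canonical neighborhood assumption. This is logically valid, but it produces an $r'$ that depends on the particular spacetime $\M$ and the compact set $K$ through $R_K$. The proposition asserts the existence of $r' = r'(\epsilon, r)$ chosen \emph{before} $\M$ is given, i.e.\ uniform over all Ricci flow spacetimes satisfying the hypotheses; there is no a priori bound on $\sup_K R$ in terms of $\epsilon$ and $r$, so your choice does not prove the stated claim. This uniformity is the substantive content of the step, and the paper obtains it by running the compactness/contradiction scheme of \cite[Theorem 52.7]{kleiner_lott_perelman_notes}: assume a sequence $\M^k$ with $r'_k \to 0$ and points of violation $m_k$; use point selection (which works because violations are confined to $K^k$ once $r'_k < r(t')$); establish bounded curvature at bounded distance using the pinching and noncollapsing already proved; and extract an approximating $\kappa$-solution to contradict the failure of the canonical neighborhood property. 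None of this appears in your proposal, so the key step is missing. (A secondary, repairable imprecision: your in-$K$ verification of Hamilton--Ivey should not be phrased as the condition being ``preserved'' on $K$, since $K$ is not flow-invariant; the correct statement, as in the paper, is that the vector-valued maximum principle argument applies globally on $\M_{\le t'}$ because any violation of \cite[(A.14)]{kleiner_lott_singular_ricci_flows} would have to occur in the compact set $K$.)
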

\begin{proof}
  Condition (a) of Definition \ref{srf}
holds on $\M_{\le t^\prime}$ by assumption.

Also by assumption, for $m \in \M_{\le t^\prime} - K$, 
the curvature operator at $m$
lies in the convex cone of \cite[(A.13)]{kleiner_lott_singular_ricci_flows}. The proof of Hamilton-Ivey
pinching in \cite[Pf. of Theorem 6.44]{chow_lu_ni_hamiltons_ricci_flow}, using
the vector-valued maximum principle, now goes through
since any violations
in $\M_{\le t^\prime}$ of
\cite[(A.14)]{kleiner_lott_singular_ricci_flows} would have to occur in $K$.
This shows that condition (b) of Definition \ref{srf}
holds on $\M_{\le t^\prime}$.

Since the $r$-canonical neighborhood assumption holds on
$\M_{\le t^\prime} - K$ the proof of \cite[Lemma 5.13]{kleiner_lott_singular_ricci_flows} 
shows that
$g_{\M}^{qp}$ is complete on $\M_{\le t^\prime}$ away from the time-zero slice
and the time-$t^\prime$ slice.
Proposition \ref{newkappa} now implies that 
$\M_{\le t^\prime}$
is $\kappa^\prime$-noncollapsed at scales less
than $\epsilon$.

To show that condition (c) of Definition \ref{srf}
holds on $\M_{\le t^\prime}$, with parameters $\epsilon$ and $\kappa^\prime$,
and some parameter $r^\prime > 0$,
we apply the method of proof of
\cite[Theorem 52.7]{kleiner_lott_perelman_notes} for smooth Ricci flow solutions.
Suppose that there is no such $r^\prime$.
Then there is a sequence $\{\M^k\}_{k=1}^\infty$ of Ricci flow spacetimes
satisfying the assumptions of the proposition, and a sequence
$r^\prime_k \rightarrow 0$, so that for each $k$
there is some $m_k \in \M^k_{\le t}$ where the 
$r^\prime_k$-canonical neighborhood assumption does not hold.
The first step in
\cite[Pf. of Theorem 52.7]{kleiner_lott_perelman_notes} 
is to find a point of violation so
that there are no nearby points of violation with much larger scalar
curvature, in an earlier time interval which is long in a scale-invariant
sense. The proof of this first step uses point selection. 
Because of
our assumption that the $r$-canonical neighborhood assumption
holds in $\M^k_{\le t^\prime} - K^k$, as soon as
$r^\prime_k < r(t^\prime)$ we know that any point of violation lies in $K^k$.
Thus this point selection argument goes through.
The second step in \cite[Pf. of Theorem 52.7]{kleiner_lott_perelman_notes}
is a bounded-curvature-at-bounded-distance statement that uses
Hamilton-Ivey pinching and $\kappa^\prime$-noncollapsing.  Since
we have already proven that the latter two properties hold, the
proof of the second step goes through.  The third and
fourth steps in \cite[Pf. of Theorem 52.7]{kleiner_lott_perelman_notes}
involve constructing an approximating $\kappa^\prime$-solution.  These last
two steps go through without change.
\end{proof}

Proposition \ref{asympprop} shows that the $r^\prime$-canonical neighborhood
assumption holds with parameter $r^\prime = r^\prime(t^\prime)$.
We can assume that $r^\prime$ is a decreasing function of $t^\prime$.
Hence $\M$ is a singular Ricci flow with parameters
$\epsilon$, $\kappa^\prime = \kappa^\prime(\epsilon)$ and
$r^\prime = r^\prime(\epsilon, r)$.

\section{Dimension of the set of singular times} \label{sec6}

In this section we give an upper bound on the Minkowski dimension of
the set of singular times for a Ricci flow spacetime.

The geometric input comes from the proofs of Propositions
\ref{prop_r_in_lp} and \ref{holder}.
We isolate it in the following lemma. The lemma states that any point with large curvature determines a region in backward spacetime on which the scalar curvature behaves nicely (i.e. $R^{-1}$ grows with upper and lower linear bounds as one goes backward in time), and which carries a controlled amount of volume.
\begin{lemma}
\label{lem_controlled_product_domain}
For every $\la >0$, $t<\infty$ there is a constant $C=C(\la,t)<\infty$ with the following property.

Let $\M$ be a singular Ricci flow and suppose $x\in \M_t$ is a point with $\rho(x):=R^{-\frac12}(x)\leq C^{-1}r(t)$.  Then  there is a product domain $U\subset \M$ defined on the time interval $[t_-,t]$, where $t_-:=t-C^{-1}r^2(t)$, with the following properties:
\ben
\item $U_t\subset B(x,C \rho(x))$.
\item (Scalar curvature control) For all $t'\in [t_-,t]$, $x'\in U_{t'}$, we have
\begin{equation}
\label{eqn_scalar_curvature_control}
C^{-1}R^{-1}(x)+\eta_-(t-t')\leq R^{-1}(x')\leq C R^{-1}(x)+\eta_+(t-t')\,.
\end{equation}
Here $\eta_\pm$ are constants coming from the geometry of $\kappa$-solutions.
\item (Volume control)
For $t'\in [t-C^{-1}r^2(t),t-\frac12 C^{-1}r^2(t)]$ we have $\vol(U_{t'})\geq C^{-1}r^{2-\la}(t)\rho^{1+\la}(x)$.  In particular the spacetime volume of $U$ is at least $\frac12 C^{-2}r^{4-\la}(t)\rho^{1+\la}(x)$.
\een
\end{lemma}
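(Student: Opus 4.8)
The plan is to extract the required product domain directly from the backward-in-time analysis already carried out in the proofs of Propositions~\ref{prop_r_in_lp} and~\ref{holder}, packaging the estimates there into the three asserted properties. The starting observation is that since $R(x)$ is large (namely $\rho(x)\le C^{-1}r(t)$ with $C$ large), the point $x$ lies in $\M_t^{>\overline R}$ for the canonical-neighborhood threshold $\overline R$, so $(\M_t,x)$ is, after rescaling by $R(x)$, $\hat\epsilon$-close to a $\kappa$-solution. I would split into the same trichotomy as in Proposition~\ref{prop_r_in_lp}: (a) $x$ is a neck point; (b) $x$ is a cap point; (c) $x$ lies in a component of $\M_t$ that is close to a compact $\kappa$-solution (so has diameter comparable to $R(x)^{-\frac12}$). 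In each case I would construct $U$ by flowing a suitable ball $B(x,c\rho(x))$ backward along worldlines for as long as the curvature stays in the canonical-neighborhood regime, and show the flow survives down to $t_-=t-C^{-1}r^2(t)$.

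First I would establish property (2), the two-sided control on $R^{-1}$. The upper bound $R^{-1}(x')\le CR^{-1}(x)+\eta_+(t-t')$ is essentially the integrated gradient estimate~(\ref{ueqn}) (with $\eta_+=\eta$), valid as long as $\gamma_{x'}$ remains in a canonical neighborhood; one must check that the hypothesis $\rho(x)\le C^{-1}r(t)$ guarantees this all the way back to $t_-$, which follows because $R(\gamma_{x'}(u))$ cannot drop below $\overline R\ge r(t)^{-2}$ on $[t_-,t]$ once $C$ is large. The lower bound $R^{-1}(x')\ge C^{-1}R^{-1}(x)+\eta_-(t-t')$ is the genuinely new ingredient: it comes from the neck-stability mechanism of Proposition~\ref{prop_r_in_lp}, where the scale-invariant time derivative $R^{-1}\partial_tR$ stays close to the cylindrical value $1$ (or, in case (c), from the analogous analysis through the nearly-round, non-round, and necklike stages, using Lemma~\ref{lemcloseness} and Corollary~\ref{corcloseness}). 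Here $\eta_-$ is a constant just below $1$ coming from the geometry of $\kappa$-solutions, exactly as the ``$q>1$'' estimate in Proposition~\ref{prop_r_in_lp} was used with $\eta_+$ just above $1$. Property~(1), $U_t\subset B(x,C\rho(x))$, is then built into the construction: I simply take $U_t=B(x,c\rho(x))$ for a small $c$, chosen so that this ball lies in the region where the $\kappa$-solution approximation is uniform.

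For property~(3), the volume control, I would use the Jacobian formula $J_{t'}(x')=\exp(-\int_{t'}^t R(\gamma_{x'}(u))\,du)$ together with the upper bound from property~(2): integrating $R(\gamma_{x'}(u))\le (C^{-1}R^{-1}(x)+\eta_+(t-u))^{-1}$ gives, on the subinterval $t'\in[t-C^{-1}r^2(t),t-\tfrac12C^{-1}r^2(t)]$, a lower bound $J_{t'}(x')\ge c(\lambda,t)\,(R(x)r^2(t))^{-\frac{1+\lambda}{2}}$ for the volume distortion going from time $t'$ to time $t$ — i.e., $\vol_{t'}(U_{t'})\ge c\,\vol_t(U_t)\,(R(x)r^2(t))^{\frac{1+\lambda}{2}}$; combined with $\vol_t(U_t)\ge c\rho^3(x)$ (from $\kappa$-noncollapsing) this yields $\vol(U_{t'})\ge C^{-1}r^{2-\lambda}(t)\rho^{1+\lambda}(x)$ after absorbing constants and taking $C$ large. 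Here the loss of a power $\lambda$ (rather than getting $\rho(x)$ with exponent $1$ exactly) is precisely what the ``$q$ close to $1$, $T''$ very negative'' trick in Proposition~\ref{prop_r_in_lp} buys, and one must iterate the single-neck estimate $O(\log(R(x)/\overline R))$ times just as there. The final sentence about spacetime volume follows by integrating over $t'$ in an interval of length $\tfrac12C^{-1}r^2(t)$.

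The main obstacle I expect is handling case (c) — components close to compact $\kappa$-solutions — uniformly, and in particular getting the \emph{lower} curvature bound $\eta_-(t-t')$ through the three distinct backward stages (nearly round; not round but bounded-diameter; long neck), since the round stage behaves differently from cylinders and one must ensure the product domain survives and that the volume does not collapse while passing between stages. This requires invoking the compactness of the relevant family of $\kappa$-solutions together with Lemma~\ref{lemcloseness} and Corollary~\ref{corcloseness} to get time-uniform control, exactly in the spirit of the ``$\M_{t,round}$/$\M_{t,nonround}$'' dichotomy in the proof of Proposition~\ref{prop_r_in_lp}. The neck and cap cases (a),(b) are comparatively routine given the machinery already in place.
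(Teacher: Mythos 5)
Your proposal follows essentially the same route as the paper's own (sketched) proof: the paper likewise reduces the lemma to the backward-in-time neck-stability and Jacobian arguments of Propositions~\ref{prop_r_in_lp} and~\ref{holder}, with the same case division into necklike points, non-necklike non-round canonical neighborhoods (reduced to a nearby neck point in a backward parabolic region), and nearly round components with the two subcases you identify. Your account of where the exponent $\la$ is lost and of the difficulty in the compact case matches the paper's treatment, so there is nothing further to flag.
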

\begin{proof}
  The proof of the lemma is based on arguments similar to those in the proofs of Propositions \ref{prop_r_in_lp} and \ref{holder}.
  We give an outline of the proof.  The details are similar to those for
  Propositions \ref{prop_r_in_lp} and \ref{holder}.

{\em Case 1.  $x$ is sufficiently neck-like that we can apply the neck stability result.}  Then we let $U$ be a product domain with $U_t=B(x,\rho(x))$.  The scalar curvature estimate (\ref{eqn_scalar_curvature_control}) then follows from the fact that the worldline of every $y\in U_t$ remains necklike until its scale becomes comparable to the canonical neighborhood scale.  The volume estimates in (3) follow using the Jacobian estimate, as in \cite[Section 5]{kleiner_lott_singular_ricci_flows} or in the proof of Proposition~\ref{prop_r_in_lp}.

{\em Case 2. The canonical neighborhood of $x$ is neither sufficiently neck-like, nor nearly round.} Then for a constant $c$ independent of $x$,   we find that $P(x,c\rho(x))\cap \M_{t-c\rho^2(x)}$ contains a necklike point $y$ to which the previous argument applies.  If the  product domain $U_y$ associated with $y$ is defined on the time interval $[t_0,t-c\rho^2(x)]$, then we let $U$ be the result of extending $U_y$ to the interval $[t_0,t]$.

{\em Case 3.  The canonical neighborhood of $x$ is nearly round, i.e. $B:=B(x,100\rho(x))$ is nearly isometric to a spherical space form, modulo rescaling.}  We follow this region backward in time, and have two subcases:

3(a).  The region remains nearly round until its scale becomes comparable to $r(t)$.  Then we take $U$ to be the product region with $U_t=B$, and the scalar curvature and volume estimates follow readily from the fact that time slices of $U$ are nearly round.

3(b). For some $t_0<t$, and every $t'\in [t_0,t]$, the image $B_{t'}$ of $B$ in  $\M_{t'}$ under the flow of the time vector field $\D_t$ is $\delta$-close to round, but $B_{t_0}\subset\M_{t_0}$ is not $\frac{\delta}{2}$-close to round.   Then we can apply Case 2 to $B_{t_0}$ to obtain a product region $U'$, and we define $U$ by extending $U'$ forward in time over the time interval $[t_0,t]$.
\end{proof}

As a corollary of this lemma, we get:


\begin{theorem}
\label{thm_singular_time_dimension}
If $\M$ is a singular Ricci flow, $T<\infty$, then the set of times $t\in [0,T]$ such that $\M_t$ is noncompact has Minkowski dimension $\leq \frac12$.
\end{theorem}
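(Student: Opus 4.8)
The plan is to derive the theorem from Lemma~\ref{lem_controlled_product_domain} by a packing argument in spacetime: attach to each noncompact time slice a product domain whose scale near that time is tuned to the separation parameter, observe that the scalar curvature control \eqref{eqn_scalar_curvature_control} forces the domains coming from well-separated noncompact slices to be pairwise \emph{disjoint}, and then bound their number using the volume lower bound in the lemma together with the finiteness of the total spacetime volume.

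Fix $\lambda\in(0,1)$ and let $C=C(\lambda,T)<\infty$ be the constant of Lemma~\ref{lem_controlled_product_domain}; since $C(\lambda,\cdot)$ may be taken nondecreasing, we may use $C$ at every $t_i\le T$. To bound the upper Minkowski dimension it suffices to bound, for each small $\epsilon>0$, the size of a maximal $\epsilon$-separated subset $t_1<\dots<t_N$ of $\{t\in[0,T]:\M_t\text{ noncompact}\}$, since packing numbers and covering numbers are comparable. For each $i$: because $\M_{t_i}$ is noncompact and $R$ is proper and bounded below, the noncompact end of $\M_{t_i}$ is a chain of $\hat\epsilon$-necks along which $R\to\infty$, so $\rho=R^{-1/2}$ is continuous on $\M_{t_i}$, attains a value of size comparable to $r(t_i)$ (where the end leaves the canonical-neighborhood regime) as well as arbitrarily small positive values. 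Hence for $\epsilon$ small I may choose $x_i\in\M_{t_i}$ with $\rho(x_i)\le C^{-1}r(t_i)$ and $\rho(x_i)^2\in[\frac{\eta_-}{4C}\epsilon,\frac{\eta_-}{2C}\epsilon]$. Lemma~\ref{lem_controlled_product_domain} then produces a product domain $U^{(i)}\subset\M$ on the time interval $[t_i-C^{-1}r^2(t_i),\,t_i]\subset[0,T]$, enjoying the scalar curvature control \eqref{eqn_scalar_curvature_control} and with spacetime volume at least $\frac12 C^{-2}r^{4-\lambda}(t_i)\rho(x_i)^{1+\lambda}\ge\beta\,\epsilon^{(1+\lambda)/2}$ for some $\beta=\beta(\lambda,T)>0$ independent of $\epsilon$ (using $r(t_i)\ge r(T)$ and $4-\lambda>0$).

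The crux is that the domains $U^{(i)}$ are pairwise disjoint. Suppose $i<j$ and $(y,s)\in U^{(i)}\cap U^{(j)}$. Then $s\le t_i\le t_j$ and $s$, hence also $t_i$, lies in the time interval of $U^{(j)}$; since $U^{(j)}$ is swept out by worldlines, the worldline $\gamma$ through $(y,s)$ satisfies $\gamma(t_i)\in U^{(j)}_{t_i}$, and of course $\gamma(t_i)\in U^{(i)}_{t_i}$. Applying \eqref{eqn_scalar_curvature_control} to $U^{(i)}$ at its \emph{top} time $t_i$ gives $R^{-1}(\gamma(t_i))\le C\,R^{-1}(x_i)=C\rho(x_i)^2$, while applying it to $U^{(j)}$ at the \emph{interior} time $t_i$ gives $R^{-1}(\gamma(t_i))\ge\eta_-(t_j-t_i)$. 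Hence $t_j-t_i\le\frac{C}{\eta_-}\rho(x_i)^2\le\frac\epsilon2<\epsilon$, contradicting $\epsilon$-separation. Therefore $\sum_i\vol(U^{(i)})\le\vol(\M_{[0,T]})$, which is finite since $\V(t)$ is bounded on $[0,T]$ by \cite[Proposition~5.5]{kleiner_lott_singular_ricci_flows}. Combining with the lower bound $\vol(U^{(i)})\ge\beta\epsilon^{(1+\lambda)/2}$ yields $N\le\beta^{-1}\vol(\M_{[0,T]})\,\epsilon^{-(1+\lambda)/2}$, so the upper Minkowski dimension of the set of times with $\M_t$ noncompact is at most $\frac{1+\lambda}{2}$; letting $\lambda\downarrow0$ proves the theorem.

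The step I expect to require the most care is the production of $x_i$ at the prescribed scale $\sim\sqrt\epsilon$ inside each noncompact slice — one must know that the noncompact end of $\M_{t_i}$ carries $\hat\epsilon$-necks of every scale in an interval of the form $(0,c\,r(t_i))$ with $c$ universal, which is part of the canonical-neighborhood picture of ends of time slices already used in the proofs of Propositions~\ref{prop_r_in_lp} and~\ref{holder}. After that, everything is a soft consequence of Lemma~\ref{lem_controlled_product_domain}; the exponent $\frac12$ is precisely the parabolic matching of a time separation of order $\epsilon$ with a squared length scale of order $\epsilon$ — a product domain built at that scale carries volume of order $\epsilon^{1/2}$ (up to the arbitrarily small $\lambda$-loss reflecting that the backward curvature growth rate $\eta_+$ need not be exactly the cylindrical value).
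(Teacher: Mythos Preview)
Your overall strategy matches the paper's: attach the product domains of Lemma~\ref{lem_controlled_product_domain} to a separated family of singular times and bound the cardinality by comparison with the total spacetime volume. You even sharpen the paper in one respect --- by tuning $\rho(x_i)^2$ to a definite fraction of the separation $\epsilon$ you force the $U^{(i)}$ to be pairwise \emph{disjoint}, whereas the paper takes all $x_i$ at a common curvature level $R(x_i)=A$ and extracts from~\eqref{eqn_scalar_curvature_control} only a bounded intersection multiplicity.

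The step you flag as delicate, however, is a genuine gap. You assert that each noncompact end of $\M_{t_i}$ carries necks of every scale in $(0,c\,r(t_i))$, citing Propositions~\ref{prop_r_in_lp} and~\ref{holder}; but nothing in those proofs supplies a uniform lower bound on $\sup\rho$ over noncompact components. A noncompact component may in principle be a capped $\hat\eps$-horn or a double $\hat\eps$-horn lying entirely inside the canonical-neighborhood regime, with $\sup\rho$ arbitrarily small, and then the intermediate-value selection of $x_i$ at scale $\sim\sqrt\epsilon$ fails for that $t_i$. The paper sidesteps this: it covers instead the larger set $\T_A=\{t\in[0,T]:\sup_{\M_t}R>A\}\supset\{t:\M_t\text{ noncompact}\}$, takes a maximal $A^{-1}$-separated subset $\{t_i^0\}$, and for each $t_i^0$ passes to a \emph{nearby} time $t_i$ with $|t_i-t_i^0|=O(A^{-1})$ at which a point with $R=A$ actually exists --- this shift is obtained by iterating \cite[Lemma~3.3]{kleiner_lott_singular_ricci_flows} along a worldline using only the gradient estimate~\eqref{gradest}, and requires no structural information about the slice $\M_{t_i^0}$ beyond $\sup R>A$. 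If you substitute this time-shifting device for your intermediate-value step, the rest of your argument (including the disjointness refinement) goes through unchanged.
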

\begin{proof}
Choose $\la>0$, and let $C=C(\la,T)<\infty$ be the constant from Lemma~\ref{lem_controlled_product_domain}.

Pick $A>C_\la^2r^{-2}(T)$.  Let $\T_A$ be the set of times $t \in[0,T]$ such that the time slice $\M_t$ contains a point with $R>A$.  

Let $\{t_i^0\}_{i\in I}$ be a maximal $A^{-1}$-separated subset of $\T_A$.  For every $i\in I$, we may find $t_i\in [0,T]$ with 
\begin{equation}
\label{eqn_ti_ti0_separation}
|t_i-t_i^0|\leq \const A^{-1}
\end{equation} such that $\M_{t_i}$ contains a point $x_i$ with $R(x_i)=A$;
the existence of such a point $t_i$ follows by iterating \cite[Lemma 3.3]{kleiner_lott_singular_ricci_flows}.  Now we apply Lemma~\ref{lem_controlled_product_domain} to $x_i$, for every $i\in I$, to obtain a collection $\{U_i\}_{i\in I}$ of product domains in $\M$.  

Note that if $i,j\in I$ and $U_i\cap U_j\neq\emptyset$, then comparing the scalar curvature using Lemma~\ref{lem_controlled_product_domain}(2), we get that $|t_i-t_j|<C_1A^{-1}$ for some $C_1=C_1(\la,T)$.  Hence the collection $\{U_i\}_{i\in I}$ has intersection multiplicity $<N=N(\la,T)$.  Now Lemma~\ref{lem_controlled_product_domain}(3) implies that the spacetime volume of each $U_i$ is at least
$$
\frac12 C^{-2}r^{4-\la}(T)A^{-\frac12(1+\la)}\,.
$$
Using the multiplicity bound and the bound on spacetime volume we get
$$
|I| \leq  C_2A^{\frac12 (1+\la)}\,,
$$
for $C_2=C_2(\la,T)$.  Since by (\ref{eqn_ti_ti0_separation}) we can cover $\T_A$ with at most $|I|$ intervals of length comparable to $A^{-1}$, this implies that $\cap_{A>0}\T_A$ has Minkowski dimension $\leq \frac12 +\frac{\la}{2}$.  As $\la$ is arbitrary, this proves the theorem.
\end{proof}

\bibliography{ricci_limits2}{}
\bibliographystyle{amsalpha}

\end{document}